\documentclass[12pt,reqno]{amsart}
\usepackage{amssymb}
\usepackage[latin1]{inputenc}
\usepackage{dsfont}
\usepackage{hyperref}
\usepackage{fullpage}

\newtheorem{theorem}{Theorem}[section]

\newtheorem{proposition}[theorem]{Proposition}
\newtheorem{corollary}[theorem]{Corollary}
\theoremstyle{definition}
\newtheorem{definition}[theorem]{Definition}

\theoremstyle{remark}
\newtheorem{remark}[theorem]{Remark}

\numberwithin{equation}{section}

\begin{document}

\title[On $\sigma$-finite measures related to the Martin boundary of recurrent Markov chains]
{On $\sigma$-finite measures related to the Martin boundary of recurrent Markov chains}
\author[J. Najnudel]{Joseph Najnudel}
\address{Institut de Math\'ematique de Toulouse, Universit\'e Paul Sabatier, Toulouse}
\email{\href{mailto:joseph.najnudel@math.univ-toulouse.fr}{joseph.najnudel@math.univ-toulouse.fr}}

\date{\today}

\begin{abstract}
In our monograph with Roynette and Yor \cite{NRY}, we construct a $\sigma$-finite measure related to 
penalisations of different stochastic processes, including the Brownian motion in dimension 1 or 2, and 
a large class of linear diffusions. In the 
last chapter of the monograph, we define similar measures from 
recurrent Markov chains satisfying some technical conditions. In the present paper, 
we give a classification of these measures, in function of the minimal Martin boundary of the Markov chain
considered at the beginning. We apply 
this classification to the examples 
considered at the end of \cite{NRY}.
\end{abstract} 

\maketitle

\section{Introduction}
In a number of articles by Roynette, Vallois and Yor, summarized in \cite{RVY}, 
the authors study many examples of probability measures on the space of continuous functions from 
$\mathbb{R}_+$ to $\mathbb{R}$,
which are obtained as weak limits of absolutely continuous measures, with respect to the law of 
the Brownian motion. 
More precisely, one considers  the Wiener measure $\mathbb{W}$
on the space $\mathcal{C}(\mathbb{R}_+, \mathbb{R})$ of
continuous functions from $\mathbb{R}_+$ to $\mathbb{R}$, 
and endowed with its canonical filtration 
$(\mathcal{F}_s)_{s \geq 0}$, and the following $\sigma$-algebra  $$\mathcal{F}:= 
\underset{s \geq 0}{\bigvee} \mathcal{F}_s.$$ 
One then considers
 $(\Gamma_t)_{t \geq 0}$, a family of 
nonnegative random variables on $\mathcal{C}(\mathbb{R}_+, \mathbb{R})$, such that 
$$0 < \mathbb{E}_{\mathbb{W}} [\Gamma_t]  < \infty,$$
and for $t \geq 0$, one defines the probability measure $$\mathbb{Q}_t := 
\frac{\Gamma_t}{\mathbb{E}_{\mathbb{W}}[\Gamma_t]} \, . \mathbb{W}.$$
 Under these assumptions, Roynette, Vallois and Yor have 
shown that for many examples of families of functionals $(\Gamma_t)_{t \geq 0}$, one can find a probability 
measure $\mathbb{Q}_{\infty}$ satisfying the following property: for all $s \geq 0$ and for all events 
$\Lambda_s \in \mathcal{F}_s$,
$$\mathbb{Q}_t [\Lambda_s] \underset{t \rightarrow \infty}{\longrightarrow} \mathbb{Q}_{\infty} [\Lambda_s].$$

In our monograph with Roynette and Yor \cite{NRY}, Chapter 1, we show that for a large class of 
functionals $(\Gamma_t)_{t \geq 0}$, the measure $\mathbb{Q}_{\infty}$ exists and is absolutely continuous 
with respect to a $\sigma$-finite measure $\mathbf{W}$, which is explicitly described and which satisfies some remarkable 
properties. In Chapters 2, 3 and 4 of the monograph, we construct an analog of the measure $\mathbf{W}$, respectively 
for the two-dimensional Brownian motion, for a large class of linear diffusions, and for a large class of 
recurrent Markov chains. In a series of papers with Nikeghbali (see \cite{NN1} and \cite{NN2}), we generalize the 
construction to submartingales $(X_s)_{s \geq 0}$ satisfying some technical conditions we do not detail here, 
and such that $X_s = N_s + A_s$, where 
$(N_s)_{s \geq 0}$ is a c\`adl\`ag martingale, $(A_s)_{s \geq 0}$ is an increasing process, and the measure $(dA_s)$ is 
carried by the set $\{s \geq 0, X_s = 0\}$. This class of submartingales, called $(\Sigma)$, was first
 introduced by Yor in \cite{Y}, and their main properties have been studied in detail by Nikeghbali in \cite{N}. 
 
 In the present paper, we focus on the setting of the recurrent Markov chains, stated in Chapter 4 of \cite{NRY}. Our main goal is to classify 
 the $\sigma$-finite measures which 
 can be obtained by the construction 
 given in the monograph.  
 In Section \ref{setting}, we summarize the most important ideas of this construction, 
 and we state some of the main properties of the corresponding $\sigma$-finite measures.  
 In Section \ref{martin}, we show that these measures can be classified via the 
 theory of Martin boundary, adapted to the case of recurrent Markov chains. 
 In Section \ref{Qalpha}, we study the behavior of the canonical trajectory under 
 some particular measures deduced from the classification given in Section \ref{martin}.  
 In Section \ref{examples}, we apply our results to the examples considered at the 
 end of our monograph \cite{NRY}. 
 
\section{The main setting} \label{setting}
Let $E$ be a countable set, $(X_n)_{n \geq 0}$ the canonical process on $E^{\mathbb{N}_0}$, 
$(\mathcal{F}_n)_{n \geq 0}$ its natural filtration, and $\mathcal{F}_{\infty}$ the $\sigma$-algebra 
generated by $(X_n)_{n \geq 0}$. We define $(\mathbb{P}_x)_{x \in E}$ as a
family of probability measures on the filtered measurable space $(E^{\mathbb{N}_0}, (\mathcal{F}_n)_{n \geq
0}, \mathcal{F}_{\infty})$ which corresponds to a Markov chain, i.e. there exists 
a family $(p_{y,z})_{y,z \in E}$ of elements in $[0,1]$ such that for all $k \geq 0$, $x_0, \dots, 
x_k \in E$, 
$$\mathbb{P}_x (X_0 = x_0, X_1 = x_1, \dots, X_k = x_k) = \mathds{1}_{x_0 = x} 
p_{x_0, x_1} p_{x_1, x_2} \dots p_{x_{k-1}, x_k}.$$
The expectation under $\mathbb{P}_x$ will be denoted $\mathbb{E}_x$. Moreover, we assume the following properties: 
\begin{itemize}
 \item For all $x \in E$, $p_{x,y} = 0$ for all but finitely many $y \in E$. 
 \item The Markov chain is irreducible, i.e. for all $x, y \in E$, there exists $n \geq 0$ such that 
 $\mathbb{P}_{x} (X_n = y) > 0$. 
 \item The Markov chain is recurrent, i.e. for all $x \in E$, $\mathbb{P}_x \left( \sum_{n \geq 0} 
 \mathds{1}_{X_n = x} = \infty \right) = 1$. 
\end{itemize}
 
Using the results in Chapter 4 of \cite{NRY}, the following proposition is not difficult to prove: 
 \begin{proposition} \label{NRY4}
  Let $x_0 \in E$, and let $\varphi$ be a function from $E$ to $\mathbb{R}_+$, such that 
  $\varphi(x_0) = 0$, and $\varphi$ is harmonic everywhere except at $x_0$, i.e. for all 
  $x \neq x_0$, 
  $$\mathbb{E}_x [\varphi(X_1)] = \sum_{y \in E} p_{x,y} \varphi(y) = \varphi(x).$$
  Then, there exists a family of $\sigma$-finite measures $(\mathbb{Q}^{x_0, \varphi}_x)_{x \in E}$ on 
  $(E^{\mathbb{N}_0}, \mathcal{F}_{\infty})$ satisfying the following properties: 
  \begin{itemize}
  \item For all $x \in E$, the canonical process starts at $x$ under $\mathbb{Q}^{x_0, \varphi}_x$, i.e.
  $$\mathbb{Q}^{x_0, \varphi}_x \left( X_0 \neq x \right) = 0.$$
   \item For all $x \in E$, the canonical process is transient under $\mathbb{Q}^{x_0, \varphi}_x$, 
   i.e. for all $x, y \in E$, 
   $$\mathbb{Q}^{x_0, \varphi}_x \left(\sum_{n \geq 1} \mathds{1}_{X_n = y} =
   \infty  \right) = 0.$$
   \item For all $n \geq 0$, for all nonnegative, $\mathcal{F}_n$-measurable functionals $F_n$, 
   $$\mathbb{Q}^{x_0, \varphi}_x \left( F_n \, \mathds{1}_{\forall k \geq n, X_k \neq x_0} \right)
   = \mathbb{E}_{x} [F_n \varphi(X_n)],$$
   where $\mathbb{Q}^{x_0, \varphi}_x (H)$ denotes the integral of $H$ with respect to $\mathbb{Q}^{x_0, \varphi}_x$.
  \end{itemize}
  Moreover, the two last items are sufficient to determine uniquely the measure $\mathbb{Q}^{x_0, \varphi}_x$.
 \end{proposition}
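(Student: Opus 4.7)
My plan is to construct $\mathbb{Q}^{x_0,\varphi}_x$ via a last-exit decomposition at $x_0$: the third item pins the measure down exactly on the events $A_n := \{X_k \neq x_0 \text{ for all } k \geq n\}$, so it suffices to build it piece by piece on each stratum $\{L = n\}$, where $L := \sup\{k \geq 0 : X_k = x_0\}$ is the last visit time. Since $\varphi$ is nonnegative, harmonic off $x_0$, and vanishes at $x_0$, the formula $\hat{p}_{y,z} := p_{y,z}\varphi(z)/\varphi(y)$ (for $y,z \neq x_0$) defines a bona fide Markov transition on $E\setminus\{x_0\}$ because $\sum_z p_{y,z}\varphi(z)=\varphi(y)$ and $\varphi(x_0)=0$; this is the Doob $h$-transform of the chain killed at $x_0$, and its laws $\hat{\mathbb{P}}_y$ live on trajectories that never visit $x_0$.

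For existence, I would define, for each $n \geq 0$, a finite measure $\lambda_n$ that (i) runs the chain as $\mathbb{P}_x$ up to time $n$ on the event $\{X_n = x_0\}$, (ii) takes one extra step with unnormalized weight $p_{x_0,y}\varphi(y)$ to $X_{n+1}=y$, and (iii) continues forever as $\hat{\mathbb{P}}_y$; together with $\lambda_{-1} := \varphi(x)\hat{\mathbb{P}}_x$ (zero if $x = x_0$) accounting for trajectories that never visit $x_0$. Setting $\mathbb{Q}^{x_0,\varphi}_x := \sum_{n\geq -1}\lambda_n$ will yield a $\sigma$-finite measure concentrated on $\{L < \infty\}$, with the first bullet built in. For the third bullet, the indicator $\mathds{1}_{A_n}$ annihilates every $\lambda_k$ with $k \geq n$ (there $X_n = x_0$) and equals $1$ on the support of each $\lambda_k$ with $k < n$; a telescoping of the $h$-transform Radon--Nikodym derivative $\prod_j \varphi(X_{j+1})/\varphi(X_j)$ then gives $\lambda_k(F_n\mathds{1}_{A_n}) = \mathbb{E}_x[F_n \varphi(X_n)\mathds{1}_{X_k=x_0,\,X_{k+1},\ldots,X_n \neq x_0}]$, and summing over $k \in \{-1,0,\ldots,n-1\}$ recovers $\mathbb{E}_x[F_n\varphi(X_n)\mathds{1}_{X_n\neq x_0}] = \mathbb{E}_x[F_n\varphi(X_n)]$ using $\varphi(x_0)=0$. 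For transience, each $\lambda_n$ visits $x_0$ only at finitely many times in $[0,n]$, and under $\hat{\mathbb{P}}_z$ the number of visits to any $y\neq x_0$ satisfies $\hat{\mathbb{E}}_z\bigl[\sum_k \mathds{1}_{X_k=y}\bigr] = (\varphi(y)/\varphi(z))\,\mathbb{E}_z[\sum_{k<T_{x_0}}\mathds{1}_{X_k=y}]$, which is finite since the chain killed at $x_0$ is genuinely transient (it dies almost surely by recurrence of the ambient chain).

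Uniqueness is a standard Dynkin argument: any $\widetilde{\mathbb{Q}}$ satisfying the last two items must concentrate on $\bigcup_n A_n$ by transience, and the third item applied at time $n$ to the $\mathcal{F}_n$-measurable $\mathds{1}_{\Lambda_m}$ (for $\Lambda_m \in \mathcal{F}_m$, $m \leq n$) pins down $\widetilde{\mathbb{Q}}(\Lambda_m \cap A_n) = \mathbb{E}_x[\mathds{1}_{\Lambda_m}\varphi(X_n)]$. Since $\widetilde{\mathbb{Q}}(A_n) = \mathbb{E}_x[\varphi(X_n)] < \infty$ (only finitely many states are reachable in $n$ steps) and such sets form a $\pi$-system generating $\mathcal{F}_\infty \cap A_n$, the two measures agree on each $A_n$ and hence on the union. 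The main technical obstacle I expect is organizing the last-exit decomposition cleanly and executing the telescoping identity; transience of the $h$-transform is then the crucial structural input, and both ingredients are standard for recurrent chains killed at a single point, essentially contained in the preparatory material of Chapter 4 of \cite{NRY}.
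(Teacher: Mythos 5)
Your argument is correct, and for the existence half it takes a genuinely different (and more self-contained) route than the paper. The paper does not construct $\mathbb{Q}^{x_0,\varphi}_x$ at all: it imports the second and third bullets wholesale from Proposition 4.2.3 and Corollary 4.2.6 of \cite{NRY}, and only proves in situ that the first bullet follows from the other two (via the neat observation that $\mathbb{E}_x[\varphi(X_n)\mathds{1}_{X_0\neq x}]=0$ for all $n$, combined with transience) and that the last two bullets determine the measure. Your last-exit decomposition into the strata $\{L=k\}$, with the Doob $h$-transform $\hat p_{y,z}=p_{y,z}\varphi(z)/\varphi(y)$ governing the post-$L$ evolution and the weight $p_{x_0,y}\varphi(y)$ at the exit step, is exactly the kind of construction carried out in Chapter 4 of \cite{NRY}, and your telescoping computation verifying $\sum_{k=-1}^{n-1}\lambda_k(F_n\mathds{1}_{A_n})=\mathbb{E}_x[F_n\varphi(X_n)]$ is sound (the partition of $\{X_n\neq x_0\}$ by the last visit to $x_0$ before time $n$ is the right bookkeeping, and $\varphi(x_0)=0$ absorbs the missing event $\{X_n=x_0\}$). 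Two points worth making explicit: the $h$-transform is only defined at states $y\neq x_0$ with $\varphi(y)>0$, but this is harmless since every entry weight into a state where $\varphi$ vanishes is itself zero; and for transience you need only almost-sure finiteness of the number of visits to each $y$ under each $\hat{\mathbb{P}}_z$, which follows because the killed chain dies a.s.\ and the return-to-$y$-before-$x_0$ probability is strictly less than $1$ (the geometric argument you sketch). Your uniqueness proof is essentially identical to the paper's: both restrict to $A_n$ where the measure is finite and pinned down by the third bullet, apply a monotone class or Dynkin argument, and let $n\to\infty$ using transience. The trade-off is that your version proves more from scratch at the cost of length, while the paper's is shorter but opaque without the monograph in hand.
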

 \begin{remark}
  If we refer to our joint work with Nikeghbali \cite{NN3}, we observe that under $\mathbb{P}_x$, 
  $(\varphi(X_n))_{n \geq 0}$ is a the discrete-time submartingale of class $(\Sigma)$, as stated in 
  Theorem 3.5 of \cite{NN3}. Moreover, one checks that $\mathbb{Q}^{x_0, \varphi}_x$ is the corresponding
  $\sigma$-finite measure, denoted $\mathcal{Q}$ in \cite{NN3}, as soon as 
  $\varphi(y) > 0 $ for all $y \neq x_0$. 
 \end{remark}

 \begin{proof}
  The second and third items are respectively Proposition 4.2.3 and Corollary 4.2.6 of \cite{NRY}. 
  The first item is a consequence of the second and the third. Indeed, by taking $F_n = \mathds{1}_{X_0 \neq x}$, 
  we get for all $n \geq 0$, 
  $$\mathbb{Q}^{x_0, \varphi}_x \left(\mathds{1}_{X_0 \neq x, \forall k \geq n, X_k \neq x_0} \right)
  = \mathbb{E}_{x} [ \varphi(X_n) \mathds{1}_{X_0 \neq x} ] = 0,$$
  and then, by taking the union for all $n \geq 0$,
  $$ \mathbb{Q}^{x_0, \varphi}_x  \left(X_0 \neq x, \exists n
  \geq 0, \forall k \geq n, X_k \neq x_0 \right) = \mathbb{Q}^{x_0, \varphi}_x
 \left(X_0 \neq x, \sum_{k \geq 0} \mathds{1}_{X_k = x_0} < \infty \right) = 0,
  $$
and then 
$$ \mathbb{Q}^{x_0, \varphi}_x (X_0 \neq x) = 0$$
by the transience of the canonical process under $\mathbb{Q}^{x_0, \varphi}_x$. 

It remains to prove that the second and the third items uniquely determine 
$\mathbb{Q}^{x_0, \varphi}_x$. From the third item, we have for all $n \geq m \geq 0$, 
\begin{align*}\mathbb{Q}^{x_0, \varphi}_x \left( F_n \mathds{1}_{\forall k \geq m, X_k \neq x_0} \right)
& = \mathbb{Q}^{x_0, \varphi}_x \left( F_n \mathds{1}_{ \forall k \in \{m, m+1, \dots, n-1\}, 
X_k \neq x_0} \mathds{1}_{\forall k \geq n, X_k \neq x_0} \right)
\\ & =  \mathbb{E}_x[ F_n \mathds{1}_{ \forall k \in \{m, m+1, \dots, n-1\}, 
X_k \neq x_0} \varphi(X_n) ].
\end{align*}
Moreover, 
$$ \mathbb{Q}^{x_0, \varphi}_x \left( \forall k \geq m, X_k \neq x_0 \right) 
= \mathbb{E}_x [ \varphi(X_m)] < \infty,$$
since $\varphi(X_m)$ is almost surely in a finite subset of $E$. Indeed, by assumption, 
for all $y \in E$, there exist only finitely many $z \in E$ such that $p_{y,z} > 0$. 
Hence, the measure 
$$ \mathds{1}_{\forall k \geq m, X_k \neq x_0}  \cdot  \mathbb{Q}^{x_0, \varphi}_x $$ 
is finite and uniquely determined for all sets in $\mathcal{F}_n$, for all $n \geq 0$. 
By monotone class theorem, this measure is uniquely determined. Taking the increasing limit 
for $m \rightarrow \infty$, the measure 
$$ \mathds{1}_{\exists m \geq 0, \forall k \geq m, X_k \neq x_0}  \cdot  \mathbb{Q}^{x_0, \varphi}_x$$
is also uniquely determined. Now, the property of transience which is assumed implies that this measure 
is $ \mathbb{Q}^{x_0, \varphi}_x$. 
 \end{proof}
 The homogeneity of the Markov chain can be stated as follows: for all $n \geq 1$, 
 $x, y \in E$, 
 $$\mathds{1}_{X_n = y} \cdot \mathbb{P}_x = \mathds{1}_{X_n = y} \cdot (\mathbb{P}^{(n)}_{x} 
 \circ  \mathbb{P}_y),$$
 where $\mathbb{P}^{(n)} \circ \mathbb{Q}$ denotes the image of $\mathbb{P} \otimes \mathbb{Q}$ by 
 the map from $E^{\mathbb{N}_0} \times E^{\mathbb{N}_0}$ to $E^{\mathbb{N}_0}$, given by 
 $$((x_0, x_1, x_2, \dots),(y_0,y_1, y_2 \dots)) \mapsto (x_0, x_1,x_2,\dots, x_n, y_1, y_2, y_3, \dots).$$
 In other words, conditionally on $X_n = y$, 
 the canonical trajectory under $\mathbb{P}_x$ has the same law as the concatenation of 
 the $n$ first steps of the canonical trajectory under $\mathbb{P}_x$, and an independant trajectory 
 following $\mathbb{P}_y$. 
 
 The following result shows that the family of
 measures $(\mathbb{Q}_x^{x_0, \varphi})_{x \in E}$ satisfies a similar property: informally, it can be 
 obtained  from $(\mathbb{P}_x)_{x \in E}$ and 
 $(\mathbb{Q}_x^{x_0, \varphi})_{x \in E}$ itself by concatenation of the trajectories. 
 \begin{proposition}
  For all $n \geq 0$, $x, y \in E$, 
  $$ \mathds{1}_{X_n = y} \cdot \mathbb{Q}_x^{x_0, \varphi}= \mathds{1}_{X_n = y}
  \cdot (\mathbb{P}^{(n)}_{x} 
 \circ  \mathbb{Q}_y^{x_0, \varphi}).$$
 \end{proposition}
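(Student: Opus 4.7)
The plan is to check that both measures agree on a $\pi$-system rich enough that the uniqueness mechanism already used in the proof of Proposition \ref{NRY4} closes the argument. Writing $\mathbb{Q} := \mathbb{P}_x^{(n)} \circ \mathbb{Q}_y^{x_0, \varphi}$ for brevity, the key computation is the following: for any $N \geq n$ and any nonnegative $\mathcal{F}_N$-measurable functional $F_N$, I would evaluate $I_N(\mu) := \mu(F_N \mathds{1}_{X_n = y} \mathds{1}_{\forall k \geq N, X_k \neq x_0})$ for each $\mu \in \{\mathbb{Q}_x^{x_0, \varphi}, \mathbb{Q}\}$. For $\mu = \mathbb{Q}_x^{x_0, \varphi}$, the third item of Proposition \ref{NRY4}, applied to the $\mathcal{F}_N$-measurable functional $F_N \mathds{1}_{X_n = y}$, gives immediately
$$I_N(\mathbb{Q}_x^{x_0, \varphi}) = \mathbb{E}_x\bigl[F_N \mathds{1}_{X_n = y} \varphi(X_N)\bigr].$$

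For $\mu = \mathbb{Q}$, I would unwind the concatenation: writing $X^{(1)} \sim \mathbb{P}_x$ and $Y^{(2)} \sim \mathbb{Q}_y^{x_0, \varphi}$ independently, the concatenated trajectory satisfies $X_k = X_k^{(1)}$ for $k \leq n$ and $X_{n+j} = Y_j^{(2)}$ for $j \geq 1$. Since $Y_0^{(2)} = y$ almost surely (first item of Proposition \ref{NRY4}), the event $\{\forall k \geq N, X_k \neq x_0\}$ coincides with $\{\forall j \geq N - n, Y_j^{(2)} \neq x_0\}$. Applying Fubini, then the third item of Proposition \ref{NRY4} to $\mathbb{Q}_y^{x_0, \varphi}$ at level $N - n$, and finally the ordinary Markov property of $\mathbb{P}_x$ at time $n$ on the event $\{X_n^{(1)} = y\}$ to recombine the two factors, one recovers exactly $I_N(\mathbb{Q}) = \mathbb{E}_x[F_N \mathds{1}_{X_n = y} \varphi(X_N)]$, matching the previous expression.

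With this cylinder identity in hand, the proof concludes by mimicking the uniqueness argument at the end of the proof of Proposition \ref{NRY4}. For each fixed $m \geq 0$ and $N \geq \max(m, n)$, splitting
$$\mathds{1}_{\forall k \geq m, X_k \neq x_0} = \mathds{1}_{\forall k \in \{m, \ldots, N-1\}, X_k \neq x_0}\, \mathds{1}_{\forall k \geq N, X_k \neq x_0}$$
and absorbing the first indicator into $F_N$, the identity above shows that the finite measures $\mathds{1}_{X_n = y} \mathds{1}_{\forall k \geq m, X_k \neq x_0} \cdot \mathbb{Q}_x^{x_0, \varphi}$ and $\mathds{1}_{X_n = y} \mathds{1}_{\forall k \geq m, X_k \neq x_0} \cdot \mathbb{Q}$ agree on every $\mathcal{F}_N$, hence on $\mathcal{F}_\infty$ by the monotone class theorem. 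Letting $m \to \infty$ and invoking the transience of $\mathbb{Q}_y^{x_0, \varphi}$---which forces the transience of $\mathbb{Q}$, since only the first $n$ steps come from $\mathbb{P}_x$---both sides converge respectively to $\mathds{1}_{X_n = y} \cdot \mathbb{Q}_x^{x_0, \varphi}$ and $\mathds{1}_{X_n = y} \cdot \mathbb{Q}$, and equality is preserved in the limit.

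The main obstacle I anticipate is nothing beyond careful bookkeeping with the concatenation convention: one must align the dropping of the coordinate $Y_0^{(2)}$ with the fact that $Y_0^{(2)} = y$ almost surely, and apply the third item of Proposition \ref{NRY4} to $\mathbb{Q}_y^{x_0, \varphi}$ at the correctly shifted index $N - n$. Once these index-matching steps are performed cleanly, the Markov property of $\mathbb{P}_x$ slots in without difficulty and the rest is a routine replay of the uniqueness argument already exhibited.
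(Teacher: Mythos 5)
Your proposal is correct and follows essentially the same route as the paper: both compute the two measures on functionals of the form $F_N \mathds{1}_{X_n = y}\mathds{1}_{\forall k \geq N,\, X_k \neq x_0}$ using the third item of Proposition \ref{NRY4} together with the Markov property of $\mathbb{P}_x$ at time $n$ (the paper just writes the Fubini/Markov step out explicitly as a sum over intermediate states $x_1,\dots,x_{n-1}$), and both then conclude by the same monotone-class-plus-transience uniqueness mechanism. The only cosmetic difference is that the paper treats the case $n=0$ separately at the outset, which in your formulation is subsumed by the general computation.
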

\begin{proof}
If $n = 0$, the two sides of the equality 
vanish for all $y \neq x$, since 
the canonical trajectory starts at $x$ 
under $\mathbb{Q}_x^{x_0, \varphi}$ and 
$\mathbb{P}_x$. If $n = 0$ and 
$y = x$, the equality we want to show is also immediate. Hence,  we can assume $n \geq 1$. 
 Let $p \geq n \geq 1$, and let $F_p$ be a $\mathcal{F}_p$-measurable, nonnegative functional.
 We have 
 $$ \left(\mathds{1}_{X_n = y} \cdot \mathbb{Q}_x^{x_0, \varphi} \right) 
\left(F_p \mathds{1}_{\forall k \geq p, X_k \neq x_0} \right) 
 = \mathbb{Q}_x^{x_0, \varphi} \left(F_p \mathds{1}_{X_n = y} \mathds{1}_{\forall k \geq p, X_k \neq x_0} 
\right)
 = \mathbb{E}_x \left[ F_p \mathds{1}_{X_n = y} \varphi(X_p) \right]$$ $$
= \sum_{x_1, x_2, \dots, x_{n-1} \in E} p_{x,x_1} p_{x_1,x_2} \dots, p_{x_{n-1}, y} 
\mathbb{E}_x[ F_p \varphi(X_p) | X_1 = x_1, \dots, X_{n-1} = x_{n-1}, X_n = y].$$
Now, the functional $F_p$ is nonnegative and $\mathcal{F}_p$-measurable, so there exists a function $\Phi$
from $E^{p+1}$ to $\mathbb{R}_+$ such that 
$$F_p = \Phi(X_0, \dots, X_p).$$
We get, using the Markov property: 
$$ \left(\mathds{1}_{X_n = y} \cdot \mathbb{Q}_x^{x_0, \varphi} \right) 
\left(F_p \mathds{1}_{\forall k \geq p, X_k \neq x_0} \right) $$ $$ 
= \sum_{x_1, x_2, \dots, x_{n-1} \in E} p_{x,x_1} p_{x_1,x_2} \dots, p_{x_{n-1}, y} 
\mathbb{E}_x[ \Phi(X_0, \dots, X_p) \varphi(X_p) | X_1 = x_1, \dots, X_{n-1} = x_{n-1}, X_n = y]$$
$$  = \sum_{x_1, x_2, \dots, x_{n-1} \in E} p_{x,x_1} p_{x_1,x_2} \dots, p_{x_{n-1}, y} 
\mathbb{E}_y[ \Phi(x_0, x_1, \dots, x_{n-1}, y, X_1, \dots, X_{p - n}) \varphi(X_{p-n})]$$
$$  = \sum_{x_1, x_2, \dots, x_{n-1} \in E} p_{x,x_1} p_{x_1,x_2} \dots, p_{x_{n-1}, y} 
\mathbb{Q}_y^{y_0, \varphi} [ \Phi(x_0, x_1, \dots, x_{n-1}, y, X_1, \dots, X_{p - n}) 
\mathds{1}_{\forall k \geq p-n, X_k \neq x_0} ]
)$$ $$ 
= \mathbb{P}_{x} \otimes \mathbb{Q}_y^{y_0, \varphi} [ \mathds{1}_{Y_n = y} 
\Phi(Y_0, \dots, Y_n, Z_1, Z_2, \dots, Z_{n-p}) 
\mathds{1}_{\forall k \geq p-n, Z_k \neq x_0}  ],$$
where in the last line, $(Y_n, Z_n)_{n \geq 0}$ denotes the canonical process on 
the space $E^{\mathbb{N}_0} \times E^{\mathbb{N}_0}$ where the measure 
$\mathbb{P}_{x} \otimes \mathbb{Q}_y^{x_0, \varphi}$ is defined. 
Hence 
$$ \left(\mathds{1}_{X_n = y} \cdot \mathbb{Q}_x^{x_0, \varphi} \right) 
\left(F_p \mathds{1}_{\forall k \geq p, X_k \neq x_0} \right) $$ 
$$ = \mathbb{P}^{(n)}_{x} \circ \mathbb{Q}_y^{y_0, \varphi}[ \mathds{1}_{X_n = y} 
\Phi(X_0, \dots, X_n, X_{n+1}, X_{n+2}, \dots, X_{p}) 
\mathds{1}_{\forall k \geq p, X_k \neq x_0}  ]$$ 
$$ =( \mathds{1}_{X_n = y}
  \cdot (\mathbb{P}^{(n)}_{x} 
 \circ  \mathbb{Q}_y^{x_0, \varphi}) )\left(F_p \mathds{1}_{\forall k \geq p, X_k \neq x_0} \right).$$
Hence, the two measures stated in the proposition coincide on all functionals of the form 
$F_p \mathds{1}_{\forall k \geq p, X_k \neq x_0}$ if $p \geq n$, and then on all functionals of the 
form 
$$ F_q \mathds{1}_{\forall k \geq p, X_k \neq x_0} = 
(F_q \mathds{1}_{\forall k \in \{p, \dots q-1\}, X_k \neq x_0} )
\mathds{1}_{\forall k \geq q, X_k \neq x_0},$$
for all $q \geq p \geq n$. 
Using the finiteness of the measure $\mathbb{Q}_x^{x_0, \varphi}$ restricted to the set
$\{\forall k \geq p, X_k \neq x_0\}$, and the monotone class theorem, one deduces that 
the two measures we are comparing coincide on all sets included in $\{\forall k \geq p, X_k \neq x_0\}$. 
Taking the union for $p \geq n$, and using the property of transience satisfied by the measures 
 $\mathbb{Q}_y^{x_0, \varphi}$, $y \in E$, one deduces that the two measures we are comparing are equal. 
\end{proof}

Knowing the result we have just proven, it is natural to ask which families of $\sigma$-finite mesures
$(\mathbb{Q}_x)_{x \in E}$ 
 on $(E^{\mathbb{N}_0}, \mathcal{F}_{\infty})$ satisfy, for all $x, y \in E$,
 $n \geq 0$, 
 \begin{equation}
 \mathds{1}_{X_n = y} \cdot \mathbb{Q}_x = \mathds{1}_{X_n = y} \cdot (\mathbb{P}^{(n)}_{x} 
 \circ  \mathbb{Q}_y). \label{1}
 \end{equation}
 We know that all linear combinations, with nonnegative coefficients, 
 of families of the form $(\mathbb{P}_x)_{x \in E}$ and 
 $(\mathbb{Q}^{x_0, \varphi}_x)_{x \in E}$ satisfy this condition. It is natural to ask if there are other 
 such families of measures. We do not know the complete answer. However, we have the following partial result: 
 \begin{proposition} \label{2.4}
  Let $(\mathbb{Q}_x)_{x \in E}$ be a family of $\sigma$-finite measures such that \eqref{1} holds for all 
  $x, y \in E$, $n \geq 0$. Then: 
  \begin{itemize}
   \item If $\mathbb{Q}_x$ is a finite measure for at least one $x \in E$, then there exists $c \geq 0$ such that
   $\mathbb{Q}_x = c \mathbb{P}_x$ for all $x \in E$. 
   \item If 
   for some $x_0, x_1 \in E$, 
   \begin{equation}
  \varphi(x) :=  \mathbb{Q}_x \left( \forall k \geq 0, X_k \neq x_0 \right) < \infty \label{2}
   \end{equation} 
    for all $x \in E$, and 
  \begin{equation}
  \mathbb{Q}_{x_1} \left( \sum_{k=0}^{\infty} \mathds{1}_{X_k = x_0} = \infty \right) 
  = 0, \label{3}
  \end{equation}
   then $\varphi(x_0) = 0$, 
   $\mathbb{E}_x (\varphi(X_1)) = \varphi(x)$ for $x \neq x_0$ and 
   $\mathbb{Q}_x  = \mathbb{Q}^{x_0, \varphi}_x$ for all $x \in E$. 
   \item Moreover, if the conditions of the previous item are satisfied, then 
  \eqref{2} and \eqref{3} are satisfied
  for all $x_0, x_1, x \in E$. The fact that \eqref{3} holds for all $x_0, x_1 \in E$ means that the canonical process is transient under $\mathbb{Q}_{x}$ for all 
  $x \in E$. 
   
  \end{itemize}
 \end{proposition}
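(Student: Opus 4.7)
The plan is to handle the three items in succession, each time reducing the claim to a harmonicity statement for the underlying chain $(\mathbb{P}_x)_{x \in E}$.

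\textbf{Item 1.} Setting $c_x := \mathbb{Q}_x(E^{\mathbb{N}_0})$, the concatenation relation \eqref{1} applied with $F_n \equiv 1$ gives $\mathbb{Q}_x(X_n = y) = \mathbb{P}_x(X_n = y)\, c_y$, hence $c_x = \mathbb{E}_x[c_{X_n}]$ for every $n$. If $c_{x^{*}} < \infty$ for some $x^{*}$, then irreducibility forces $c_y < \infty$ for every $y \in E$; since bounded nonnegative harmonic functions of a recurrent irreducible chain are constant, one gets $c_y \equiv c$. Decomposing an arbitrary cylinder event in $\mathcal{F}_n$ over the value of $X_n$ and invoking \eqref{1} once more then yields $\mathbb{Q}_x = c\, \mathbb{P}_x$ on $\mathcal{F}_n$, and the monotone class theorem extends this to $\mathcal{F}_\infty$.

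\textbf{Item 2.} The formula \eqref{1} at $n = 0$ forces $\mathbb{Q}_x$ to be supported on $\{X_0 = x\}$: under the concatenation $\mathbb{P}^{(0)}_x \circ \mathbb{Q}_y$ the first coordinate of the image is always $x$, so $\mathbb{Q}_x(X_0 = y) = 0$ for $y \neq x$. In particular $\varphi(x_0) \leq \mathbb{Q}_{x_0}(X_0 \neq x_0) = 0$. Decomposing $\mathbb{Q}_x(F_n \mathds{1}_{\forall k \geq n, X_k \neq x_0})$ over the value of $X_n$, applying \eqref{1} termwise, and using the support of $\mathbb{Q}_y$ together with $\varphi(x_0) = 0$, I recover the third bullet of Proposition \ref{NRY4}:
\[
\mathbb{Q}_x(F_n \mathds{1}_{\forall k \geq n, X_k \neq x_0}) = \mathbb{E}_x[F_n \varphi(X_n)].
\]
Taking $F \equiv 1$ and comparing $n = 0$ with $n = 1$ at a point $x \neq x_0$ yields $\varphi(x) = \mathbb{E}_x[\varphi(X_1)]$. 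It remains to establish transience at $x_0$, namely $\mathbb{Q}_x(N(x_0) = \infty) = 0$ for every $x \in E$, where $N(x_0) := \sum_k \mathds{1}_{X_k = x_0}$. Setting $\psi(y) := \mathbb{Q}_y(N(x_0) = \infty)$ and exploiting the shift-invariance of $\{N(x_0) = \infty\}$, relation \eqref{1} gives $\mathbb{Q}_x(X_n = y, N(x_0) = \infty) = \mathbb{P}_x(X_n = y)\, \psi(y)$; summing over $y$ yields $\psi(x) = \mathbb{E}_x[\psi(X_n)]$. Since \eqref{3} says $\psi(x_1) = 0$, each nonnegative term in that sum must vanish, and irreducibility propagates $\psi \equiv 0$. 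Transience at $x_0$ being in hand, the uniqueness clause of Proposition \ref{NRY4} identifies $\mathbb{Q}_x$ with $\mathbb{Q}_x^{x_0,\varphi}$.

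\textbf{Item 3.} Once $\mathbb{Q}_x = \mathbb{Q}_x^{x_0, \varphi}$ has been established, the full transience built into Proposition \ref{NRY4} immediately produces \eqref{3} for every pair $(x_0', x_1')$. The remaining point, and in my view the main obstacle of the whole proof, is \eqref{2} at an arbitrary $x_0' \in E$: showing $\varphi'(y) := \mathbb{Q}_y(\forall k \geq 0, X_k \neq x_0') < \infty$. Using transience at $x_0$ to rewrite the defining event as the increasing union $\bigcup_n \{\forall k \geq 0, X_k \neq x_0'\} \cap \{\forall k \geq n, X_k \neq x_0\}$, and applying the formula of item 2 with $F_n = \mathds{1}_{\forall k \leq n, X_k \neq x_0'}$, I arrive at
\[
\varphi'(y) \leq \limsup_{n \to \infty} \mathbb{E}_y\bigl[\mathds{1}_{T_{x_0'} > n}\, \varphi(X_n)\bigr],
\]
where $T_{x_0'}$ is the first hitting time of $x_0'$ under $\mathbb{P}_y$. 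The process $(\varphi(X_n))$ is a $\mathbb{P}_y$-submartingale whose compensator equals $c\, L_n$, where $L_n$ counts the visits to $x_0$ before time $n$ and $c = \mathbb{E}_{x_0}[\varphi(X_1)]$; optional stopping at $T_{x_0'} \wedge n$ then gives $\mathbb{E}_y[\varphi(X_{T_{x_0'} \wedge n})] = \varphi(y) + c\, \mathbb{E}_y[L_{T_{x_0'} \wedge n}] \leq \varphi(y) + c\, \mathbb{E}_y[L_{T_{x_0'}}]$. The finiteness of $\mathbb{E}_y[L_{T_{x_0'}}]$, i.e.\ the expected number of visits to $x_0$ before hitting $x_0'$, is standard from irreducibility plus recurrence via an excursion argument, which caps the bound uniformly in $n$ and delivers $\varphi'(y) < \infty$.
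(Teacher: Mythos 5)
Your argument is correct, and for the first two items it coincides with the paper's proof almost step by step: the total-mass function is harmonic and hence constant by recurrence and irreducibility; the support property at $n=0$, the identity $\mathbb{Q}_x(F_n\mathds{1}_{\forall k\ge n,\,X_k\ne x_0})=\mathbb{E}_x[F_n\varphi(X_n)]$, the propagation of \eqref{3} from $x_1$ to all starting points by irreducibility, and the appeal to the uniqueness clause of Proposition \ref{NRY4} are all exactly the paper's steps. (One wording point in Item 1: drop ``bounded'' --- $c$ is not known to be bounded a priori; what you need, and what is true and used in the paper, is that every nonnegative finite-valued harmonic function of an irreducible recurrent chain is constant, via a.s.\ convergence of the nonnegative martingale $c_{X_n}$.) The only genuine divergence is in Item 3, where the paper obtains the bound $\varphi'(y)\le\varphi(y)+K\,\mathbb{E}_y[L^{x_0}_{T_{x_0'}-1}]$ by decomposing the event $\{\forall k,\,X_k\ne x_0'\}$ under $\mathbb{Q}^{x_0,\varphi}_y$ according to the last visit $g_{x_0}$ to $x_0$ and evaluating each slice with the defining identity at time $n+1$, whereas you work entirely under $\mathbb{P}_y$, identify the compensator $K\,L^{x_0}_{n-1}$ of the submartingale $(\varphi(X_n))$, and apply optional stopping at the bounded time $T_{x_0'}\wedge n$ (together with $\varphi\ge 0$ to discard the contribution of $\{T_{x_0'}\le n\}$). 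The two computations produce the same quantity --- the paper's sum $\sum_n\mathbb{E}_x[\mathds{1}_{X_n=x_0,\,T_{x_0'}>n}\,\varphi(X_{n+1})]$ is precisely your compensator term --- so this is a reorganization rather than a new idea, though yours is arguably cleaner in that it avoids a last-exit decomposition of a $\sigma$-finite measure. The one step you leave as ``standard,'' the finiteness of $\mathbb{E}_y[L^{x_0}_{T_{x_0'}-1}]$, is the exact point the paper does spell out: by the strong Markov property $\mathbb{P}_y[L^{x_0}_{T_{x_0'}-1}\ge p]\le P^{p-1}$ with $P=\mathbb{P}_{x_0}[\tau_2^{x_0}<T_{x_0'}]$, and $P<1$ because $P=1$ would force the chain started at $x_0$ never to hit $x_0'$, contradicting irreducibility and recurrence; you should include this short argument, but it is indeed the excursion argument you name, so there is no gap of substance.
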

\begin{proof}
 Let us assume that $\mathbb{Q}_{x_0}$ has finite total mass for some $x_0 \in E$. 
  For $y \in E$, let $\psi(y)$ be the total mass of $\mathbb{Q}_y$. 
For all $n \geq 1$, for any nonnegative, $\mathcal{F}_n$-measurable functional $F_n$, and for all $y \in E$, we deduce 
from \eqref{1}: 
$$ \mathbb{Q}_{x_0} [ F_n \mathds{1}_{X_n = y}] = (\mathbb{P}^{(n)}_{x_0} 
 \circ  \mathbb{Q}_y)[F_n \mathds{1}_{X_n = y} ] = \mathbb{E}_{x_0}[ F_n  \mathds{1}_{X_n = y}] \mathbb{Q}_y(1) 
 = \psi(y) \mathbb{E}_{x_0}[ F_n  \mathds{1}_{X_n = y}],$$
 and then, by adding these expressions for all $y \in E$: 
 $$\mathbb{Q}_{x_0} [F_n] = \mathbb{E}_{x_0} [F_n \psi(X_n)].$$
 For $F_n = 1$, we get
 $$ \mathbb{E}_{x_0} [\psi(X_n)]  = \psi(x_0).$$
 By assumption, $\psi(x_0) < \infty$, and one deduces that $\psi(X_n)$ is 
 $\mathbb{P}_{x_0}$-almost surely finite for all $n \geq 1$. Since the Markov chain is assumed to be irreducible, one deduces that $\psi(y) < \infty$ for all $y \in E$. 
 On the other hand, if $F_n$ is nonnegative, $\mathcal{F}_n$-measurable, and then also $\mathcal{F}_{n+1}$-measurable, 
 one has 
 $$ \mathbb{E}_{x_0} [F_n \psi(X_{n+1})] = \mathbb{Q}_{x_0} [F_n] = \mathbb{E}_{x_0} [F_n \psi(X_n)],$$
 and then 
 $$\mathbb{E}_{x_0}[\psi(X_{n+1}) | \mathcal{F}_n] = \psi(X_n),$$
 i.e. $(\psi(X_n))_{n \geq 1}$ is a $\mathbb{P}_{x_0}$-martingale. 
 Since this martingale is nonnegative, it converges almost surely. On the other hand, since $(X_n)_{n \geq 0}$ is 
 recurrent and irreducible, it visites all the states infinitely often. One easily deduces that 
 $\psi$ is a constant function, let $c \geq 0$ be this constant. One has 
 $$ \mathbb{Q}_{x_0} [F_n] = c \mathbb{E}_{x_0} [F_n],$$
 and then $\mathbb{Q}_{x_0}$ and $c \mathbb{P}_{x_0}$ are two finite measures which coincide on all $\mathcal{F}_n$, $n \geq 1$, 
 and then on all $\mathcal{F}_{\infty}$. 
 Moreover, since we have now proven that the total mass of $\mathbb{Q}_x$ is finite for all $x \in E$, 
 we can replace, in the previous discussion, $x_0$ by any $x \in E$. We then deduce that 
 $\mathbb{Q}_x = c \mathbb{P}_x$ for all $x \in E$. 
 
 Now, let us assume \eqref{2} and \eqref{3} for some $x_0, x_1 \in E$ and all $x \in E$. 
 For all $y \neq x$, 
 $$\mathds{1}_{X_0 = y}  \cdot \mathbb{Q}_x = \mathds{1}_{X_0 = y} \cdot 
 (\mathbb{P}_x^{(0)} \circ \mathbb{Q}_y)$$
 is the measure identically equal to zero, since $X_0 = x \neq y$ almost everywhere under $\mathbb{P}_x$, and then under $(\mathbb{P}_x^{(0)} \circ \mathbb{Q}_y)$. 
 Hence, $X_0 = x$ almost everywhere under $\mathbb{Q}_x$. It is then obvious that 
 $$\varphi(x_0) = \mathbb{Q}_{x_0}
 (\forall k \geq 0, X_k \neq x_0) = 0.$$
 Moreover, for all $x \neq x_0$, 
 \begin{align*}
 \varphi(x) = \mathbb{Q}_{x}
 (\forall k \geq 0, X_k \neq x_0) 
 & = \mathbb{Q}_{x}
 (\forall k \geq 1, X_k \neq x_0)
 \\ & = \sum_{y \in E} 
  \mathbb{Q}_{x}
 (X_1  = y, \forall k \geq 1, X_k \neq x_0) 
 \\ &= \sum_{y \in E} 
 (\mathbb{P}_x^{(1)} \circ \mathbb{Q}_y)
 (X_1  = y, \forall k \geq 1, X_k \neq x_0)
 \\ & = \sum_{y \in E}
 \mathbb{P}_x [X_1 = y] 
  \mathbb{Q}_{y}
 (\forall k \geq 0, X_k \neq x_0) 
 \\ & = \sum_{y \in E}
 \mathbb{P}_x [X_1 = y] \varphi (y) 
 = \mathbb{E}_{x} [\varphi(X_1)].  
 \end{align*}
 Now, for all $\mathcal{F}_n$-measurable, nonnegative functional $F_n$, we get: 
 \begin{align*}
 \mathbb{Q}_x [F_n \mathds{1}_{\forall 
 k \geq n, X_k \neq x_0}] 
 & = \sum_{y \in E} 
 (\mathbb{P}_x^{(n)} \circ \mathbb{Q}_y)
 [F_n \mathds{1}_{X_n = y, \forall 
 k \geq n, X_k \neq x_0} ]
  \\ & =  \sum_{y \in E} 
  \mathbb{E}_x [F_n \mathds{1}_{X_n = y} 
  ] \mathbb{Q}_y (\forall 
 k \geq 0, X_k \neq x_0)
 \\ & = \sum_{y \in E} \varphi(y) 
  \mathbb{E}_x [F_n \mathds{1}_{X_n = y} 
  ]
  \\ & = \mathbb{E}_x 
  [F_n \varphi(X_n)] = \mathbb{Q}_{x}^{x_0, \varphi} [F_n \mathds{1}_{\forall 
 k \geq n, X_k \neq x_0}].
 \end{align*}
 Hence, $\mathbb{Q}_x$ and $\mathbb{Q}_{x}^{x_0, \varphi}$ coincide for functionals 
of the form $ F_n \mathds{1}_{\forall 
 k \geq n, X_k \neq x_0}$, then 
for functionals of the form 
$$ F_p \mathds{1}_{\forall 
 k \geq n, X_k \neq x_0}
 = F_p \mathds{1}_{\forall k \in \{n, \dots, p-1\}, X_k \neq x_0}\mathds{1}_{\forall k \geq p, X_k \neq x_0}$$
 for $p \geq n$, then for all events
 of the form 
 $ A \cap \{\forall k \geq n, X_k \neq x_0\}$, $A \in \mathcal{F}_{\infty}$, then for their union for $n \geq 1$, i.e. 
 $A \cap \{\exists n \geq 1, 
 \forall k \geq n, X_k \neq x_0\}$. This implies 
 $\mathbb{Q}_x = \mathbb{Q}^{x_0, \varphi}_x$, provided that we check that under 
 these two measures, the canonical process 
 hits $x_0$ finitely many times, i.e. 
 $$ \mathbb{Q}_x  \left( \sum_{k=0}^{\infty} \mathds{1}_{X_k = x_0} = \infty \right) 
  =  \mathbb{Q}^{x_0, \varphi}_x  \left( \sum_{k=0}^{\infty} \mathds{1}_{X_k = x_0} = \infty \right) = 0.$$
 Now, for all $n \geq 0$, we have by 
 assumption  
 \begin{align*}
 0 = \mathbb{Q}_{x_1}  \left( \sum_{k=0}^{\infty} \mathds{1}_{X_k = x_0} = \infty \right) 
 & = \sum_{y \in E} \mathbb{Q}_{x_1}  \left(
 X_n = y,  \sum_{k=n}^{\infty} \mathds{1}_{X_k = x_0} = \infty \right) 
 \\ & = \sum_{y \in E}(\mathbb{P}^{(n)}_{x_1}
 \circ \mathbb{Q}_y) \left(
 X_n = y,  \sum_{k=n}^{\infty} \mathds{1}_{X_k = x_0} = \infty \right) 
 \\ & =  \sum_{y \in E}
 \mathbb{P}_{x_1} (X_n = y) 
 \mathbb{Q}_y \left(\sum_{k=0}^{\infty} \mathds{1}_{X_k = x_0} = \infty \right), 
\end{align*}
which implies that 
$$\mathbb{Q}_y \left(\sum_{k=n}^{\infty} \mathds{1}_{X_k = x_0} = \infty \right) = 0$$
for all $y \in E$ such that $\mathbb{P}_{x_1} (X_n = y) > 0$. 
Since the Markov chain is irreducible, 
we deduce that 
$$ \mathbb{Q}_x \left(\sum_{k=n}^{\infty} \mathds{1}_{X_k = x_0} = \infty \right) = 0$$
for all $x \in E$. 
On the other hand, the transience of the 
canonical trajectory under $\mathbb{Q}^{x_0, \varphi}_x$, stated in Proposition 
\ref{NRY4}, implies that 
$$ \mathbb{Q}^{x_0, \varphi}_x \left(\sum_{k=n}^{\infty} \mathds{1}_{X_k = x_0} = \infty \right) = 0,$$
which completes the proof that 
$\mathbb{Q}_x = \mathbb{Q}^{x_0, \varphi}_x$. 
The transience of the canonical process under $\mathbb{Q}_x = \mathbb{Q}^{x_0, \varphi}_x$ for all $x \in E$ means
that \ref{3} is satisfied for all $x_0, x_1 \in E$. It only remains to check that 
\eqref{2} holds for all $x, x_0 \in E$, i.e. that for all $x, y \in E$, 
$$\mathbb{Q}^{x_0, \varphi}_x (\forall k \geq 0, X_k \neq y)  < \infty.$$
If $g_{x_0}$ denotes the last hitting time of $x_0$ by the canonical process, which is
finite almost everywhere since the process is transient, we get: 
\begin{align*}
\mathbb{Q}^{x_0, \varphi}_x (\forall k \geq 0, X_k \neq y) & 
= \mathbb{Q}^{x_0, \varphi}_x (\forall k \geq 0, X_k \notin \{x_0, y \})
+ \sum_{n \geq 0} \mathbb{Q}^{x_0, \varphi}_x ( g_{x_0} = n,  \forall k \geq 0, X_k \neq y)
\\ &  \leq
\mathbb{Q}^{x_0, \varphi}_x (\forall k \geq 0, X_k \neq x_0)
\\ & + \sum_{n \geq 0} 
\mathbb{Q}^{x_0, \varphi}_x (X_n = x_0,  \forall k \in \{0,1, \dots, n\}, X_k \neq y, \forall k \geq n+1, X_k \neq x_0)
\\ & = \varphi (x) + \sum_{n \geq 0} 
\mathbb{E}_x [\mathds{1}_{X_n = x_0, 
\forall k \in \{0, 1, \dots, n\}, X_n \neq y} \varphi(X_{n+1})]
\\ & = \varphi(x) + 
\sum_{n \geq 0} \mathbb{E}_x [\mathds{1}_{X_n = x_0, 
\forall k \in \{0, 1, \dots, n\}, X_n \neq y} \mathbb{E}_x [\varphi(X_{n+1} )| \mathcal{F}_n]].
\end{align*}
 Now, on the event $X_n = x_0$, the conditional expectation of $\varphi(X_{n+1})$ given $\mathcal{F}_n$ is equal to 
 $$K := \mathbb{E}_{x_0} [\varphi(X_1)] 
 = \sum_{y \in E} p_{x_0, y} \varphi(y),$$
 where $K$ is finite since $p_{x_0, y} = 0$ for all but finitely many $y \in E$. 
 If $T_{y}$ denotes the first hitting time of $y$ by the canonical trajectory, we then get: 
$$\mathbb{Q}^{x_0, \varphi}_x (\forall k \geq 0, X_k \neq y)  \leq \varphi(x) + K 
\mathbb{E}_x \left[ \sum_{n \geq 0} 
\mathds{1}_{X_n = x_0, T_y > n} \right] 
  = \varphi(x) + K \mathbb{E}_{x} 
  [L_{T_y-1}^{x_0}],$$
  where $L_n^x$ denotes the number of hitting times of $x$ at or before time $n$.  
  It is then sufficient to check that 
  $\mathbb{E}_{x} [L_{T_y - 1}^{x_0}]$ is finite. Now, if for $p \geq 1$, $\tau_{p}^{x_0}$ denotes the $p$-th hitting time of $x_0$, we get, using the strong Markov property: 
  $$\mathbb{P}_x [L_{T_y - 1}^{x_0} \geq p]  = \mathbb{P}_x [\tau_{p}^{x_0} < T_y]  = \mathbb{P}_x [\tau_{1}^{x_0} < T_y] \left(\mathbb{P}_{x_0} [\tau_{2}^{x_0} < T_y] \right)^{p-1}
   \leq P^{p-1}$$
  where 
  $$P = \mathbb{P}_{x_0} [\tau_2^{x_0} 
  < T_y].$$
  It is not possible that $P = 1$, otherwise, by the strong Markov property, 
  $$ \mathbb{P}_{x_0} [n-1 < T_y] \geq \mathbb{P}_{x_0} [\tau_n^{x_0} 
  < T_y] = 1$$
  for all $n \geq 1$, and then the canonical trajectory would never hit $y$ under 
  $\mathbb{P}_{x_0}$, which contradicts the  fact that the Markov chain is irreducible and recurrent. Now, since $P <1$, the tail of the law of $L_{T_y-1}^{x_0}$ 
  under $\mathbb{P}_x$ is exponentially decreasing, which implies that 
   $$\mathbb{E}_{x} [L_{T_y - 1}^{x_0}]
   < \infty$$
   and then 
   $$ \mathbb{Q}^{x_0, \varphi}_x (\forall k \geq 0, X_k \neq y) < \infty.$$
\end{proof}
A corollary of Proposition \ref{2.4} is the following result, already contained in  Theorem 4.2.5 of \cite{NRY}:
\begin{corollary}
Let $x_0, x_1 \in E$, and let $\varphi_{x_0}$ be 
a function from $E$ to $\mathbb{R}_+$ such that $\varphi_{x_0}(x_0)= 0$ and 
$\mathbb{E}_{x} [\varphi_{x_0}(X_1)] = \varphi_{x_0}(x)$ for all $x \neq x_0$. 
Then, the function $\varphi_{x_1}$ given by 
$$\varphi_{x_1} (x) := 
\mathbb{Q}_x^{x_0, \varphi_{x_0}} \left(\forall 
k \geq 0, X_k \neq x_1 \right)$$
vanishes at $x_1$, takes finite values and is harmonic at any other point than $x_1$. Moreover, we have, for all $x \in E$, the equality of measures 
$$\mathbb{Q}_{x}^{x_1, \varphi_{x_1}}
= \mathbb{Q}_{x}^{x_0, \varphi_{x_0}}.$$
\end{corollary}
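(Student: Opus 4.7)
The plan is to apply Proposition~\ref{2.4} to the family $(\mathbb{Q}_x^{x_0, \varphi_{x_0}})_{x \in E}$, which by the proposition preceding Proposition~\ref{2.4} already satisfies the concatenation identity~\eqref{1}. Once that is in hand, the corollary is essentially a relabeling of the distinguished point from $x_0$ to $x_1$ inside Proposition~\ref{2.4}.

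First I would verify that this family meets the hypotheses of the second bullet of Proposition~\ref{2.4} with $x_0$ playing its own role. The finiteness condition~\eqref{2} at $x_0$ amounts to
$$\mathbb{Q}_x^{x_0,\varphi_{x_0}}\left(\forall k \geq 0,\, X_k \neq x_0\right) = \mathbb{E}_x[\varphi_{x_0}(X_0)] = \varphi_{x_0}(x) < \infty,$$
obtained by specialising the third item of Proposition~\ref{NRY4} to $n=0$ and $F_n \equiv 1$; and \eqref{3} for any choice of $x_1$ is immediate from the transience of the canonical process under $\mathbb{Q}_x^{x_0,\varphi_{x_0}}$, also stated in Proposition~\ref{NRY4}.

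Next I would plug this verification into the third bullet of Proposition~\ref{2.4}, which upgrades \eqref{2} and \eqref{3} so that they hold for \emph{every} choice of the distinguished pair. Specialising to the new point $x_1$ gives at once the finiteness of
$$\varphi_{x_1}(x) = \mathbb{Q}_x^{x_0,\varphi_{x_0}}\left(\forall k \geq 0,\, X_k \neq x_1\right)$$
for all $x \in E$, together with the relevant transience statement; this is the key technical input.

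Finally I would invoke the second bullet of Proposition~\ref{2.4} a second time, now with $x_1$ in place of $x_0$. In a single stroke this delivers the three remaining claims: $\varphi_{x_1}(x_1) = 0$, the harmonicity $\mathbb{E}_x[\varphi_{x_1}(X_1)] = \varphi_{x_1}(x)$ for $x \neq x_1$, and the identification $\mathbb{Q}_x^{x_0,\varphi_{x_0}} = \mathbb{Q}_x^{x_1,\varphi_{x_1}}$ for every $x \in E$. I do not anticipate a genuine obstacle, since the entire argument is a structured appeal to Proposition~\ref{2.4}; the only care required is to check the hypotheses of its second bullet at the first pass, so that the third bullet is legitimately available to perform the relabeling.
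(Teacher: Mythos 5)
Your proposal is correct and follows essentially the same route as the paper: verify \eqref{1}, \eqref{2}, \eqref{3} for $(\mathbb{Q}_x^{x_0,\varphi_{x_0}})_{x\in E}$, use the last item of Proposition \ref{2.4} to transfer \eqref{2} and \eqref{3} to the point $x_1$, and then apply the second item with $x_1$ as the distinguished point. Your explicit check of \eqref{2} via the $n=0$ case of Proposition \ref{NRY4} is a detail the paper leaves implicit, but it is exactly the intended justification.
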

\begin{proof}
We know that \eqref{1}, \eqref{2}, \eqref{3} are satisfied for
$\mathbb{Q}_x = \mathbb{Q}_{x}^{x_0, \varphi_{x_0}}$ and $\varphi = \varphi_{x_0}$. By the 
last item of Proposition \ref{2.4}, 
\eqref{2} and \eqref{3} are still satisfied if we replace $x_0$ by $x_1$, i.e. 
$$\varphi_{x_1} (x) := 
\mathbb{Q}_x \left(\forall 
k \geq 0, X_k \neq x_1 \right) < \infty$$
and 
$$\mathbb{Q}_{x_1} \left( \sum_{k=0}^{\infty}
 \mathds{1}_{X_k = x_1} = \infty \right) = 0.$$
 Now, from the second item of Proposition 
 \ref{2.4}, $\varphi_{x_1}$ vanishes at 
 $x_1$ and is harmonic at any other point, 
 and $\mathbb{Q}_x = \mathbb{Q}_x^{x_1, \varphi_{x_1}}$.  
\end{proof}
From this corollary, we see that in order  to describe
a family of measures of the form $(\mathbb{Q}^{x_0, \varphi_{x_0}}_{x})_{x \in E}$, the role of $x_0$ can be taken by any point in $E$, so the choice of $x_0$ is 
not so important. In the next section, we will clarify this phenomenon, by studying the link between the measures of the form
$\mathbb{Q}^{x_0, \varphi}_{x}$, and the 
Martin boundary of the Markov chain induced 
by $(\mathbb{P}_x)_{x \in E}$. 
We will use the following definition: 
\begin{definition}
We will say that a family $(\mathbb{Q}_x)_{x \in E}$ of $\sigma$-finite measures on $(E^{\mathbb{N}_0}, \mathcal{F}_{\infty})$ is in the {\it class $\mathcal{Q}$}, with respect to 
$(\mathbb{P}_x)_{x \in E}$, if and only if
\eqref{1}, \eqref{2} and \eqref{3} hold for all $x, x_0, x_1, y \in E$ and
$n \geq 0$, or equivalently, iff
it is of the form
$(\mathbb{Q}^{x_0, \varphi}_x)_{x \in E}$
for some $x_0 \in E$, and for some
 function $\varphi$ which is nonnegative, equal to zero at $x_0$ and harmonic for $(\mathbb{P}_x)_{x \in E}$ at any point different from $x_0$. 
\end{definition}
 \section{Link with the Martin boundary}
  \label{martin} 
  In \cite{M}, Martin proves that one can describe all the nonnegative harmonic functions 
  on a sufficiently regular domain of $\mathbb{R}^d$, by a formula which generalizes the Poisson integral formula, available for the harmonic functions on the unit disc. 
This construction has been adapted   
 to the setting of transient Markov chains
 by Doob \cite{D} and Hunt \cite{H}, and 
 then to the setting of recurrent Markov chains by Kemeny and Snell in \cite{KS}, 
 and by Orey in \cite{O}. The construction is also described in a survey by Woess 
 (see \cite{W}, Section 7.H.). 
 
  Let us first recall a possible construction of the Martin boundary, for a transient Markov chain on the countable set $E$. 
  For $x, y \in E$, let $q_{x,y}$ be the transition probability of the Markov chain from $x$ to $y$, and let $G$ be the Green function:  
  $$G(x,y) = \sum_{k=0}^{\infty} 
  (q^k)_{x,y}$$
  where $q^k$ is defined inductively by 
  $$(q^0)_{x,y} = \mathds{1}_{x=y}, 
  \; (q^{k+1})_{x,y} = 
  \sum_{z \in E} (q^k)_{x,z} q_{z,y}.$$
 Let us fix $x_0 \in E$, and let us assume 
 that $G(x_0, y)> 0$ for all  $y \in E$,
 i.e. any state in $E$ is accesible from 
 $x_0$ by the Markov chain. 
 Let $K_{x_0}$ be the function, from $E$ to $\mathbb{R}_+$, given by 
  $$K_{x_0}(x,y) = \frac{G(x,y)}{G(x_0,y)}.$$ 
  One can prove that 
  $$C_{x_0}(x) := \sup_{y \in E} K_{x_0}
  (x,y) < \infty$$
and then, if $w = (w_x)_{x \in E}$ is a summable family of elements in $\mathbb{R}_+^*$, one can define a distance $\rho_{x_0, w}$ on $E$ by 
$$\rho_{x_0, w}(x,y) := \sum_{z \in E}
w_z \, \frac{|K_{x_0} (z,x) - K_{x_0} (z,y)| + |\mathds{1}_{z = x} - \mathds{1}_{z = y}|}{C_{x_0} (z) + 1}.$$
The {\it Martin compactification} of $E$ is 
the topological space $\widehat{E}$, induced by the completion of the metric space $(E, \rho_{x_0, w})$: up to  homeomorphism, $\widehat{E}$ does not depend on the choice of $w$ and the point $x_0$
such that $G(x_0,y) > 0$ for all $y \in E$. The space $\widehat{E}$ is 
compact, its subspace $\partial E := \widehat{E} \backslash E$ is 
a closed set in $\widehat{E}$, called the 
{\it Martin boundary} of $E$. 

If $G(x_0, y) > 0$ for all $y \in E$, and 
if $x \in E$, then the function 
$y \mapsto K_{x_0} (x, y)$
is Lipschitz (with a constant at most
$[1 + C_{x_0} (x)]/w_x$), and then the function $K_{x_0}$ from $E \times E$ to 
$\mathbb{R}_+$ can be uniquely extended by continuity to the set $E \times \widehat{E}$. 
For all $\alpha \in \widehat{E}$, the 
function $x \mapsto K_{x_0}(x,\alpha)$ is 
superharmonic for the  
transition probabilities $(q_{x,y})_{x,y
\in E}$  i.e. for all $x \in E$, 
$$K_{x_0}(x, \alpha) \geq \sum_{y \in E} 
q_{x,y} K_{x_0}(y, \alpha),$$
and it can be harmonic only for $\alpha \in \partial E$. We define the {\it 
minimal boundary} of $E$ as the set
$\partial_m E$ of points 
$\alpha \in \partial E$, such that 
the function  $x \mapsto K_{x_0}(x,\alpha)$
is {\it minimal harmonic}, i.e. it is harmonic, 
and for any  harmonic function $\psi : E \rightarrow \mathbb{R}$ such that 
$0 \leq \psi(x) \leq K_{x_0}(x,\alpha)$ for all 
$x \in E$, there exists $c \in [0,1]$ such that $\psi(x) = c K_{x_0}(x, \alpha)$ for all 
$x \in E$. 
 The following result holds: 
\begin{proposition} \label{31}
The set $\partial_m E$ is a Borel subset of $\partial E$ which, up to canonical homeomorphism, does not depend 
on the choice of $x_0$. Moreover, for any 
choice of $x_0$, a 
nonnegative function $\psi$ from 
$E$ to $\mathbb{R}$ is harmonic if and only if there exists a finite measure
$\mu_{\psi, x_0}$  on 
$\partial_m E$, such that for all $x \in E$, 
$$\psi(x) = \int_{\partial_m E} 
K_{x_0}(x, \alpha) d \mu_{\psi, x_0}(\alpha).$$
If it exists, the mesure $\mu_{\psi,x_0}$ is uniquely determined. 
\end{proposition}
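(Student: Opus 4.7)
The plan is to recast the statement as a Choquet-theoretic representation for the compact convex set of normalised nonnegative harmonic functions, following the classical approach of Doob and Hunt. First I would introduce $\mathcal{H}_{x_0}$, the set of nonnegative $(q_{x,y})$-harmonic functions $\psi : E \rightarrow \mathbb{R}_+$ normalised by $\psi(x_0) = 1$, endowed with the topology of pointwise convergence. A pointwise bound of the form $\psi(x) \leq C_{x_0}(x)$, obtained either from an iterated harmonicity argument comparing $\psi$ to $K_{x_0}(\cdot, y)$ through the Green function or a posteriori from the Martin representation itself, places $\mathcal{H}_{x_0}$ as a closed subset of the compact metrisable product $\prod_{x \in E}[0, C_{x_0}(x)]$, making it compact, convex and metrisable, and so opening the door to Choquet's theorem.

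Next I would identify the extreme points of $\mathcal{H}_{x_0}$ with $\{K_{x_0}(\cdot, \alpha) : \alpha \in \partial_m E\}$. Since $K_{x_0}(x_0, y) = 1$ for every $y \in E$ and this normalisation persists in the continuous extension to $\widehat{E}$, one has $K_{x_0}(\cdot, \alpha) \in \mathcal{H}_{x_0}$ for every $\alpha \in \partial_m E$, and the minimality condition stated in the excerpt is exactly extremality inside $\mathcal{H}_{x_0}$. Conversely, if $\psi \in \mathcal{H}_{x_0}$ is extreme, I would approximate $\psi$ by finite averages of the form $\sum_y \mu_n(y) K_{x_0}(\cdot, y)$ coming from the Green-kernel representation on an exhausting sequence of finite subsets, extract a subsequential weak limit $\mu$ on $\widehat{E}$, and use extremality of $\psi$ together with the superharmonicity of $K_{x_0}(\cdot, y)$ for $y \in E$ to force $\mu$ to be a Dirac mass located on $\partial_m E$. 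For the Borel measurability of $\partial_m E$ inside $\partial E$, I would use the characterisation that $\alpha$ is minimal iff any nonnegative harmonic function dominated by $K_{x_0}(\cdot, \alpha)$ is a scalar multiple of it; restricting to rational scalars and to the countable set of test points $x \in E$ writes this condition as a countable intersection of Borel-in-$\alpha$ statements.

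Independence of $x_0$ follows from the identity $K_{x_0'}(x, y) = K_{x_0}(x, y)/K_{x_0}(x_0', y)$, valid for $y \in E$ and extending continuously to $\widehat{E}$: the Martin topologies associated with two admissible base points therefore coincide on $E$ and so produce the same completion, while the minimality of $\alpha$ is preserved because the defining kernel is merely multiplied by the strictly positive factor $K_{x_0}(x_0', \alpha)$. The integral representation for an arbitrary nonnegative harmonic $\psi$ is then Choquet's theorem applied to the metrisable compact convex $\mathcal{H}_{x_0}$, after rescaling by $\psi(x_0)$ to handle the unnormalised case.

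The hard part will be the uniqueness of the representing measure $\mu_{\psi, x_0}$, since Choquet's theorem in its basic form yields only existence; uniqueness is equivalent to $\mathcal{H}_{x_0}$ being a Choquet simplex, i.e.\ to the cone of nonnegative harmonic functions being a lattice for its intrinsic order. I would follow the classical route via the Doob $h$-transform: to each $\psi \in \mathcal{H}_{x_0}$ one attaches the Markov chain with transition kernel $q^{\psi}_{x, y} = q_{x, y}\psi(y)/\psi(x)$, and reduces the simplex property to a tail $0$--$1$ law for the exit behaviour of these $h$-chains toward the minimal boundary. Establishing this $0$--$1$ law, classical since the work of Doob and Hunt and well surveyed in \cite{W}, is the technical heart of the proposition.
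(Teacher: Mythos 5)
The paper does not prove Proposition \ref{31} at all: it is quoted as a classical result of the transient Martin boundary theory of Doob and Hunt (references \cite{D}, \cite{H}, and the survey \cite{W}), so there is no in-paper argument to compare yours against. Your outline is precisely the standard Choquet-theoretic route that those references follow --- compactness of the normalised cone $\mathcal{H}_{x_0}$, identification of its extreme points with the minimal kernels $K_{x_0}(\cdot,\alpha)$, $\alpha\in\partial_m E$, via approximation of harmonic functions by potentials, base-point independence from the cocycle identity $K_{x_0'}(x,y)=K_{x_0}(x,y)/K_{x_0}(x_0',y)$, and uniqueness via the simplex (lattice) property of the cone --- and as a high-level plan it is sound. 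Two caveats. First, the Borel measurability of $\partial_m E$ does not follow from ``restricting to rational scalars and to the countable set of test points'': the minimality condition quantifies over the uncountable family of harmonic minorants of $K_{x_0}(\cdot,\alpha)$, and a priori that only yields a co-analytic set. The standard repair is the probabilistic characterisation: $\alpha$ is minimal if and only if the Doob $h$-chain with $h=K_{x_0}(\cdot,\alpha)$ converges almost surely to $\alpha$ in $\widehat{E}$, and $\alpha\mapsto\mathbb{P}^{h}_{x_0}(X_n\to\alpha)$ is a pointwise limit of measurable functions, hence Borel. Second, you correctly isolate uniqueness of $\mu_{\psi,x_0}$ as the technical heart, but you only reduce it to a tail $0$--$1$ law for the $h$-chains without establishing it; that reduction is the right one, but in a self-contained proof it is exactly where the work lies. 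Also note that closedness of $\mathcal{H}_{x_0}$ under pointwise limits (needed for compactness) uses either the finite-range hypothesis of the paper or a Fatou/Harnack argument; worth a sentence. None of this changes the verdict that your architecture matches the classical proof the paper is implicitly invoking.
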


Let us now go back to the assumptions of Section \ref{setting}. In this setting,  
the canonical process $(X_n)_{n \geq 0}$ 
is irreducible and recurrent under $\mathbb{P}_x$ for all 
$x \in E$, and all the  nonnegative harmonic functions are constant. 
Indeed, if $\psi : E \rightarrow \mathbb{R}_+$ is harmonic, $(\psi(X_n))_{n \geq 1}$ is a nonnegative 
martingale, and then it converges a.s., which 
is only possible for $\psi$ constant, since 
$(X_n)_{n \geq 1}$ hits all the points of $E$ infinitely often. 
Then, the definition of the Martin boundary should be modified in order to give a non-trivial result. 
The idea is to kill the Markov chain at some time in order to get a finite Green function. The time which is chosen 
occurs just before the first strictly positive hitting time of some $x_0 \in E$. 
The Green function we obtain in this way is  given by 
\begin{equation} G_{x_0}(x,y) := 
\mathbb{E}_x \left[ L^{y}_{T'_{x_0}-1}
\right], \label{green}
\end{equation}
where 
$$T'_{x_0} := \inf \{n \geq 1, X_n  = x_0\}.$$
Recall that $L^x_n$ denotes the number of hitting times of $x$ at 
and before time $n$. It is easy to check, using the strong Markov property, that the tail of the distribution of $L_{T'_{x_0}}^y$ is exponentially decreasing, which implies that $G_{x_0}(x,y)$ is finite. 
Moreover, 
$G_{x_0}(x_0,y)$ is strictly positive, since all the states in $E$ are accessible from $x_0$ (recall that the Markov chain is irreducible), and then they are also accessible without returning to $x_0$. 
Hence, one can define, similarly as $K_{x_0}(x,y)$ in the transient case: 
$$L_{x_0} (x,y) := 
\frac{G_{x_0}(x,y)}{G_{x_0} (x_0,y)}.$$
The function $L_{x_0}$ induces a distance $\delta_{x_0,w}$ on $E$, given by 
$$\delta_{x_0,w} (x,y)
:= 
\sum_{z \in E}
w_z \, \frac{|L_{x_0} (z,x) - L_{x_0} (z,y)| + |\mathds{1}_{z = x} - \mathds{1}_{z = y}|}{D_{x_0} (z) + 1}.$$
where, as before, $w := (w_x)_{x \in E}$, 
and where
$$D_{x_0}(z) :=   \sup_{y \in E} 
L_{x_0} (z,y) < \infty.$$
The completion of $(E, \delta_{x_0,w})$ induces a topological space $\overline{E}$, called, as before, the 
Martin compactification of $E$: it is possible to prove that the topological structure
of $\overline{E}$ does not depend on $w$ and $x_0$.
 
The transitions of the Markov chain killed just before going to $x_0$ at of after time $1$ are given by 
$(p_{x,y} \mathds{1}_{y \neq x_0})_{x,y \in E}$. A function $\widetilde{\varphi}$ from $E$ to 
$\mathbb{R}_+$ is harmonic with respect to these transitions if an only if the function $\varphi$ given by 
$$\varphi(x) = \widetilde{\varphi} (x) \mathds{1}_{x \neq x_0}$$
is harmonic for the initial Markov chain at any point except $x_0$, and if 
$$ \widetilde{\varphi} (x_0) = \sum_{y \in E} p_{x,y} \varphi(y) = \mathbb{E}_x [\varphi(X_1)].$$
The map going from $\widetilde{\varphi}$ to $\varphi$ is linear and bijective.
By continuity, one can extend $L_{x_0} (x, \alpha)$ to all $x \in E$ and $\alpha \in \overline{E}$. For 
$\alpha$ fixed this function 
is, as in the transient case, superharmonic with respect to the transitions
$(p_{x,y} \mathds{1}_{y \neq x_0})_{x,y \in E}$, and it can only be harmonic for $\alpha$ in the boundary $\partial E$ 
of 
$\overline{E}$, which is, as in the transient case, called the Martin boundary of the Markov chain. 
The minimal boundary $\partial_m E$ is the set of $\alpha \in \partial E$ such that 
$x \mapsto L_{x_0} (x,\alpha)$ is minimal harmonic for $(p_{x,y} \mathds{1}_{y \neq x_0})_{x,y \in E}$. As in 
the transient case, one can show that all harmonic functions for $(p_{x,y} \mathds{1}_{y \neq x_0})_{x,y \in E}$
can be written, in a unique way, as the integral of $x \mapsto L_{x_0} (x, \alpha)$ with respect to
$d\mu (\alpha)$, $\mu$ being a measure on the minimal boundary $\partial_m E$. Stating this precisely, and 
writing this in terms of $\varphi$ rather than $\widetilde{\varphi}$ gives the following: 
 \begin{proposition} \label{decompositionphi} 
The set $\partial_m E$ is a Borel subset of $\partial E$ which does not depend 
on the choice of $x_0$. Moreover, for any $x_0 \in E$, a 
nonnegative function $\varphi$ from 
$E$ to $\mathbb{R}$, such that $\varphi(x_0) = 0$, satisfies $\mathbb{E}_x [\varphi(X_1)] = \varphi(x)$
for all $x \neq x_0$
if and only if there exists a finite measure
$\mu_{\varphi, x_0}$  on 
$\partial_m E$, such that for all $x \neq x_0$, 
$$\varphi(x) = \int_{\partial_m E} 
L_{x_0}(x, \alpha) d \mu_{\varphi, x_0}(\alpha).$$
If it exists, the mesure $\mu_{\varphi,x_0}$ is uniquely determined, and has total mass equal to 
$$\widetilde{\varphi}(x_0) := \mathbb{E}_{x_0} [\varphi(X_1)].$$
\end{proposition}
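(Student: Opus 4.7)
The plan is to reduce the statement to Proposition \ref{31} applied to the Markov chain killed just before its first return to $x_0$.

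First, I would introduce the sub-Markov transitions $\tilde{p}_{x,y} := p_{x,y}\mathds{1}_{y \neq x_0}$. By irreducibility and recurrence, every state is accessible from $x_0$ without an intermediate return to $x_0$, so the killed chain is transient with Green function $G_{x_0}(x,y)$ as in \eqref{green}, finite everywhere and strictly positive when the first argument is $x_0$. Proposition \ref{31} applied to the killed chain then supplies the Martin compactification $\overline{E}$ with kernel $L_{x_0}$ and a Borel minimal boundary $\partial_m E \subset \partial E$, along with the integral representation of every nonnegative $\tilde{p}$-harmonic function.

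Second, I would use the bijection between $\varphi$ and $\widetilde{\varphi}$ already recorded in the excerpt: a nonnegative $\varphi$ with $\varphi(x_0) = 0$ is harmonic for $(p_{x,y})$ at every point distinct from $x_0$ if and only if $\widetilde{\varphi}$, defined by $\widetilde{\varphi}(x) = \varphi(x)$ for $x \neq x_0$ and $\widetilde{\varphi}(x_0) = \mathbb{E}_{x_0}[\varphi(X_1)]$, is nonnegative and $\tilde{p}$-harmonic on all of $E$. Applying Proposition \ref{31} to $\widetilde{\varphi}$ yields a unique finite measure $\mu_{\varphi, x_0}$ on $\partial_m E$ with $\widetilde{\varphi}(x) = \int L_{x_0}(x,\alpha)\, d\mu_{\varphi, x_0}(\alpha)$ for every $x \in E$; restricting to $x \neq x_0$ gives the formula claimed. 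Since $L_{x_0}(x_0, y) = 1$ for all $y \in E$ and $L_{x_0}$ extends continuously to $E \times \overline{E}$, the function $L_{x_0}(x_0, \cdot)$ equals $1$ identically on $\overline{E}$; evaluating the integral formula at $x = x_0$ yields the total mass $\mu_{\varphi,x_0}(\partial_m E) = \widetilde{\varphi}(x_0) = \mathbb{E}_{x_0}[\varphi(X_1)]$.

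The main obstacle is verifying that $\partial_m E$ does not depend on the choice of $x_0$; the topology of $\overline{E}$, and hence of $\partial E$, is already known to be intrinsic. My strategy here is to exploit the corollary preceding the definition of class $\mathcal{Q}$: the map $\varphi \mapsto (x \mapsto \mathbb{Q}_x^{x_0,\varphi}(\forall k \geq 0, X_k \neq x_1))$ is a linear bijection between the convex cone of nonnegative functions vanishing at $x_0$ and harmonic off $x_0$, and the analogous cone relative to $x_1$, and therefore sends extreme rays to extreme rays. Via the integral representation already established for each choice of killing point, extreme rays in each cone correspond to points of the respective minimal boundaries, and hence one obtains a Borel bijection of the candidate minimal boundaries. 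To identify this bijection with the identity map on $\partial E$ induced by the common topology of $\overline{E}$, I would approximate a boundary point $\alpha$ by a sequence $y_n \in E$ with $y_n \to \alpha$ in $\overline{E}$ and pass to the limit in the ratio $L_{x_0}(\cdot, y_n)/L_{x_1}(\cdot, y_n)$, showing that the normalizations $G_{x_1}(x_1, y_n)/G_{x_0}(x_0, y_n)$ converge to a finite positive constant by a standard cycle argument. This reduction to the classical transient change-of-reference-point argument is the delicate technical step.
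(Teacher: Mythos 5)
Your reduction to Proposition \ref{31} applied to the chain killed just before its first return to $x_0$, via the linear bijection $\varphi \leftrightarrow \widetilde{\varphi}$ and the identity $L_{x_0}(x_0,\cdot)\equiv 1$ for the total-mass claim, is exactly the route the paper takes, which in fact states this proposition without a formal proof as a restatement of the classical recurrent boundary theory of Kemeny--Snell and Orey. Your sketch of the independence of $\partial_m E$ from $x_0$ (extreme rays of the two cones plus the change-of-reference-point comparison of $G_{x_0}$ and $G_{x_1}$) goes beyond what the paper writes down here, but it is sound and consistent with the Green-function decomposition via $G_{x_0,x_1}$ that the paper carries out later when proving $\mathbb{Q}^{x_0,\varphi_{x_0,\alpha}}_x=\mathbb{Q}^{x_1,\varphi_{x_1,\alpha}}_x$.
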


Now, we can use this result in order to classify the families of 
measures $(\mathbb{Q}^{x_0, \varphi}_x)_{x \in E}$ introduced in Section \ref{setting}. 

Since the Markov chain is irreducible and recurrent, it admits a nonnegative stationnary measure, which is 
unique up to a multiplicative constant. If we fix the constant of normalization, one gets
a function $\beta$ from 
$E$ to $\mathbb{R}_+$, such that for all $y \in E$, 
$$\beta(y) = \sum_{y \in E} p_{x,y} \beta(x).$$
Moreover, the function $\beta$ never vanishes. 
One then gets the following result: 
\begin{proposition}
 For $\alpha \in \partial_m E$, and for all $x_0 \in E$, the function 
 $$\varphi_{x_0, \alpha} : x \mapsto  \frac{L_{x_0}(x,\alpha)}{\beta(x_0)} \mathds{1}_{x \neq x_0},$$
 which vanishes at $x_0$, is harmonic at every point except $x_0$. Moreover, the 
 family of $\sigma$-finite measures
 $(\mathbb{Q}^{x_0, \varphi_{x_0,\alpha}}_x)_{x \in E}$ does not depend on $x_0$. 
\end{proposition}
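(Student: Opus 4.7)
I would prove the proposition in two steps.

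Step 1 (vanishing and harmonicity of $\varphi_{x_0,\alpha}$). The function vanishes at $x_0$ by construction. For $x \neq x_0$, since $\alpha$ lies in the minimal Martin boundary, the paragraph preceding Proposition \ref{decompositionphi} guarantees that $y \mapsto L_{x_0}(y,\alpha)$ is harmonic for the killed transitions $p_{y,z}\mathds{1}_{z \neq x_0}$, i.e.\
$$L_{x_0}(x,\alpha) = \sum_{y \neq x_0} p_{x,y}\, L_{x_0}(y,\alpha).$$
Dividing by $\beta(x_0)$ and absorbing the missing $y = x_0$ term using $\varphi_{x_0,\alpha}(x_0) = 0$, this becomes $\varphi_{x_0,\alpha}(x) = \sum_y p_{x,y}\varphi_{x_0,\alpha}(y) = \mathbb{E}_x[\varphi_{x_0,\alpha}(X_1)]$, as required.

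Step 2 (independence of $x_0$). Fix two base points $x_0, x_1 \in E$ and set $\mathbb{Q}_x := \mathbb{Q}^{x_0,\varphi_{x_0,\alpha}}_x$. By the corollary preceding the proposition, $\mathbb{Q}_x = \mathbb{Q}^{x_1,\psi}_x$ where $\psi(x) := \mathbb{Q}_x(\forall k \geq 0,\, X_k \neq x_1)$ vanishes at $x_1$ and is harmonic elsewhere. It therefore suffices to identify $\psi$ with $\varphi_{x_1,\alpha}$. By Proposition \ref{decompositionphi} applied with base point $x_1$, there is a unique finite measure $\mu$ on $\partial_m E$ with $\psi(x) = \int_{\partial_m E} L_{x_1}(x,\gamma)\, d\mu(\gamma)$, so the task reduces to showing $\mu = \delta_\alpha / \beta(x_1)$.

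To compute $\psi$, I would decompose the event $\{\forall k \geq 0,\, X_k \neq x_1\}$ under $\mathbb{Q}_x$ according to the last visit $g_{x_0}$ of the trajectory to $x_0$, which is a.s.\ finite by transience. On $\{g_{x_0} = n\}$, relation \eqref{1} splits the trajectory at time $n$: the prefix is governed by $\mathbb{P}_x$ conditioned on $X_n = x_0$, and the suffix runs a fresh $\mathbb{Q}_{x_0}$-trajectory which, by Proposition \ref{NRY4}, contributes a $\varphi_{x_0,\alpha}$-weighted $\mathbb{P}_{x_0}$-expectation on the event of avoiding $\{x_0,x_1\}$ forever. The piece where $x_0$ is never visited is handled by a martingale argument, since $(\varphi_{x_0,\alpha}(X_{n \wedge \tau}))_{n \geq 0}$ is a $\mathbb{P}_x$-martingale when $\tau$ is the first hitting time of $\{x_0,x_1\}$. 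Combining these yields a closed expression for $\psi(x)$ in terms of $\varphi_{x_0,\alpha}$ and hitting probabilities of $x_0$ versus $x_1$ under $\mathbb{P}$.

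The main obstacle is then to recognise this closed expression as $L_{x_1}(x,\alpha)/\beta(x_1)$. The crucial bookkeeping identity is
$$\frac{L_{x_0}(x,y)}{\beta(x_0)} \;=\; \frac{G_{x_0}(x,y)}{\beta(y)},$$
which follows from $\beta(y) = \beta(x_0)G_{x_0}(x_0,y)$, together with its analogue centred at $x_1$ and a duality/time-reversal identity relating $G_{x_0}$ and $G_{x_1}$. I would first verify the identification with $\alpha$ replaced by an approximating $y_n \in E$ with $y_n \to \alpha$ in $\overline{E}$, for which the Green-function manipulations are genuine finite sums, and then pass to the limit via the continuity of $L_{x_0}(x,\cdot)$ and $L_{x_1}(x,\cdot)$ on $\overline{E}$. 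The hard part is precisely this last comparison: making the symmetry between the two base points visible in a formula whose derivation is asymmetric in $x_0$ and $x_1$.
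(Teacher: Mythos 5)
Your Step 1 is fine and matches the paper's (essentially one-line) treatment of harmonicity. Your Step 2, however, takes a genuinely different route and stops short of the decisive step. The reduction via the Corollary is legitimate: it does suffice to show that $\psi(x) := \mathbb{Q}^{x_0,\varphi_{x_0,\alpha}}_x(\forall k \geq 0,\ X_k \neq x_1)$ coincides with $\varphi_{x_1,\alpha}$, equivalently that its representing measure in Proposition \ref{decompositionphi} (based at $x_1$) is $\delta_\alpha/\beta(x_1)$. But that identification is the entire mathematical content of the statement, and your proposal only names it as ``the hard part'' without carrying it out. The last-visit decomposition you describe does yield a closed expression, roughly $\psi(x) = \varphi_{x_0,\alpha}(x) - \varphi_{x_0,\alpha}(x_1)\,\mathbb{P}_x(T_{x_1}<T_{x_0}) + c\, G_{x_1}(x,x_0)$ with $c = \mathbb{Q}^{x_0,\varphi_{x_0,\alpha}}_{x_0}(\forall k\geq 1,\ X_k\notin\{x_0,x_1\})$, but you never exhibit the ``duality/time-reversal identity'' that is supposed to convert this into $\lim_{y\to\alpha} G_{x_1}(x,y)/\beta(y)$; attempting it leads to limits such as $\lim_{y\to\alpha} G_{x_0,x_1}(x_0,y)/\beta(y)$, which reintroduce the unknown quantity $L_{x_1}(x_0,\alpha)$ and must be solved for. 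As written, the argument restates the problem rather than solving it.

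The paper avoids the need for any exact identity. It invokes Proposition 4.2.10 of \cite{NRY}: the families $(\mathbb{Q}^{x_0,\varphi_{x_0,\alpha}}_x)_{x\in E}$ and $(\mathbb{Q}^{x_1,\varphi_{x_1,\alpha}}_x)_{x\in E}$ coincide as soon as the ratio $\varphi_{x_1,\alpha}/\varphi_{x_0,\alpha}$ tends to $1$ where the functions are large, and this is implied by the crude bound $\sup_x|\varphi_{x_0,\alpha}(x)-\varphi_{x_1,\alpha}(x)|<\infty$. Writing $\varphi_{x_i,\alpha}(x) = \lim_{y\to\alpha} G_{x_i}(x,y)/\beta(y)$ (via $G_{x_i}(x_i,y) = \beta(y)/\beta(x_i)$), this bound follows from the decomposition $G_{x_0}(x,y) = G_{x_0,x_1}(x,y) + G_{x_0,x_1}(x_1,y)\,G_{x_0}(x,x_1)$ together with the elementary estimates $G_{x_0,x_1}(x_1,y) \leq \beta(y)/\beta(x_1)$ and $G_{x_0}(x,x_1)\leq G_{x_0}(x_1,x_1)$. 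This is the same kind of decomposition at visits to the second base point that you gesture at, but it is used only for a uniform upper bound, which is much cheaper than the exact limit computation your plan requires. To salvage your route you would need either to carry the Green-function limits through exactly (solving the resulting linear relations for the unknown constants), or to replace the exact identification by the asymptotic-equivalence criterion --- at which point you have reproduced the paper's proof.
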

\begin{proof}
The fact that $\varphi_{x_0, \alpha}$ is harmonic everywhere except at $x_0$ comes directly from the definition of 
the minimal Martin boundary. Moreover, if $x_0, x_1 \in E$, we have proven in Proposition 4.2.10 of \cite{NRY} that 
$\mathbb{Q}^{x_0, \varphi_{x_0,\alpha}}_x = \mathbb{Q}^{x_1, \varphi_{x_1,\alpha}}_x$ for all $x \in E$, if and only if 
for all $\epsilon \in (0,1)$, there exists $A > 0$ such that for all $x \in E$, 
$ \varphi_{x_0,\alpha}(x) + \varphi_{x_1,\alpha}(x) \geq A$ implies 
$$(1-\epsilon)  \varphi_{x_0,\alpha}(x) <  \varphi_{x_1,\alpha}(x) <(1+\epsilon)  \varphi_{x_0,\alpha}(x).$$
One easily checks that this condition is implied by:  
$$\sup_{x \in E}  |\varphi_{x_0,\alpha}(x)  - \varphi_{x_1,\alpha}(x)| < \infty.$$
It is then sufficient to prove this bound for all $x_0, x_1 \in E$ such that $x_0 \neq x_1$. 
Now, a classical construction of the stationnary measure $\beta$ implies, using \eqref{green}, that 
$$G_{x_0} (x_0, y) = \frac{\beta(y)}{\beta(x_0)},$$
which implies, for all $x \neq x_0$, 
$$\varphi_{x_0,\alpha}(x) = \frac{ L_{x_0} (x, \alpha)}{\beta(x_0)} = 
\lim_{y \rightarrow \alpha, y \in E } \frac{G_{x_0} (x, y)}{\beta(x_0)G_{x_0} (x_0,y)} 
= \lim_{y \rightarrow \alpha, y \in E }\frac{G_{x_0} (x, y)}{\beta(y)}, $$
and for 
$x \neq x_1$,
$$ \varphi_{x_1,\alpha}(x)  =  \lim_{y \rightarrow \alpha, y \in E }\frac{G_{x_1} (x, y)}{\beta(y)}.$$
It is then sufficient to prove
$$ \sup_{x \in E \backslash \{x_0, x_1\}, y \in E} \frac{|G_{x_0} (x, y) - G_{x_1} (x, y) |}{\beta(y)} < \infty.$$
Let $G_{x_0,x_1}$ be the Green function of the Markov chain corresponding to $(\mathbb{P}_x)_{x \in \mathbb{R}}$, 
killed just before its first strictly positive hitting time of the set $\{x_0,x_1\}$: 
$$G_{x_0,x_1} (x,y) = \mathbb{E}_x [  L^{y}_{(T'_{x_0} \wedge T'_{x_1})-1}
],$$
where $T'_z$ is the first strictly positive hitting time of $z$.
It is sufficient to prove 
$$\sup_{x \in E \backslash \{x_0, x_1\}, y \in E} \frac{|G_{x_0} (x, y) - G_{x_0,x_1} (x, y) |}{\beta(y)} < \infty$$
and 
$$\sup_{x \in E \backslash \{x_0, x_1\}, y \in E} \frac{|G_{x_1} (x, y) - G_{x_0,x_1} (x, y) |}{\beta(y)} < \infty.$$
Let us show the first bound: the second is obtained by exchanging $x_0$ and $x_1$. 
If $\tau_p^{x_1}$ denotes the $p$-th hitting time of $x_1$, one gets for all 
$x \in E \backslash \{x_0, x_1\}, y \in E$, 
\begin{align*} G_{x_0} (x, y) & = \mathbb{E}_x \left[ \sum_{n=0}^{\infty} 
\mathds{1}_{X_n \in y, n < T'_{x_0} } \right] 
\\ & = \mathbb{E}_x \left[ \sum_{n = 0}^{\tau_1^{x_1} - 1} \mathds{1}_{X_n \in y, n < T'_{x_0}} \right]
+ \mathbb{E}_x \left[ \sum_{p=1}^{\infty} 
 \sum_{n = \tau_p^{x_1}}^{  \tau_{p+1}^{x_1} - 1} 
\mathds{1}_{X_n \in y, n < T'_{x_0}} \right] 
\end{align*}
Since $x \neq x_1$ and then $\tau_1^{x_1 } = T'_{x_1}$, the first term of the last sum is exactly $G_{x_0,x_1} (x, y)$, and by the strong Markov property: 
$$ \mathbb{E}_x \left[\sum_{n = \tau_p^{x_1}}^{  \tau_{p+1}^{x_1} - 1} 
\mathds{1}_{X_n \in y, n < T'_{x_0}} \right] 
= \mathbb{P}_x [ \tau_p^{x_1} < T'_{x_0}] \mathbb{E}_{x_1} 
\left[ \sum_{n = 0}^{T'_{x_1} - 1} \mathds{1}_{X_n \in y, n < T'_{x_0}
} \right] 
= G_{x_0,x_1} (x_1,y) \mathbb{P}_x [ \tau_p^{x_1} < T'_{x_0}] .$$
Hence,
 \begin{align*} G_{x_0} (x, y) & =  G_{x_0,x_1} (x, y) + G_{x_0,x_1} (x_1,y)
  \sum_{p = 1}^{\infty}  \mathbb{P}_x [ \tau_p^{x_1} < T'_{x_0}]
  \\ & =  G_{x_0,x_1} (x, y) + G_{x_0,x_1} (x_1,y) \mathbb{E}_x [ L_{T'_{x_0} - 1}^{x_1}] 
  \\ & = G_{x_0,x_1} (x, y) + G_{x_0,x_1} (x_1,y) G_{x_0} (x,x_1).
 \end{align*}
It is then sufficient to check 
$$ \sup_{x \in E \backslash \{x_0, x_1\}, y \in E} \frac{G_{x_0,x_1} (x_1,y) G_{x_0} (x,x_1)}{\beta(y)} <\infty.$$
Now, 
$$G_{x_0,x_1} (x_1, y) \leq G_{x_1} (x_1, y) = \frac{\beta(y)}{\beta(x_1)}$$
and using the Markov property at the first hitting time of $x_1$, 
$$ G_{x_0} (x,x_1) \leq G_{x_0} (x_1,x_1),$$
which implies 
$$  \sup_{x \in E \backslash \{x_0, x_1\}, y \in E} \frac{G_{x_0,x_1} (x_1,y) G_{x_0} (x,x_1)}{\beta(y)}
\leq \frac{ G_{x_0} (x_1,x_1)}{\beta(x_1)} < \infty.$$
\end{proof}
Since the normalization of $\beta$ is supposed to be fixed, the result we have just proven allows to write, for
all $\alpha \in \partial_m E$, 
$$ \mathbb{Q}^{\alpha}_x := \mathbb{Q}^{x_0, \varphi_{x_0,\alpha}}_x,$$
since the right-hand side does not depend on $x_0 \in E$. 

Using the minimal boundary, one deduces a complete classification of the families of $\sigma$-finite measures 
in the class $\mathcal{Q}$. 
\begin{proposition}
Let $(\mathbb{Q}_x)_{x \in E}$ be a family of $\sigma$-finite measures on 
$(E^{\mathbb{N}_0},\mathcal{F}_{\infty})$. Then $(\mathbb{Q}_x)_{x \in E}$ is 
in the class $\mathcal{Q}$ if and only if there exists a
finite measure $\mu$ on $\partial_m E$ such that for all $A \in \mathcal{F}_{\infty}$, $x \in E$, 
$$\mathbb{Q}_x(A) = \int_{\partial_m E} \mathbb{Q}^{\alpha}_x (A) d \mu(\alpha).$$
In this case, $\mu$ is uniquely determined. 
\end{proposition}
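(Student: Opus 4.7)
The plan is to leverage Proposition \ref{decompositionphi}, which decomposes any admissible $\varphi$ as an integral against $L_{x_0}(\cdot,\alpha)$ on the minimal Martin boundary, together with the linearity in $\varphi$ of the construction $\varphi \mapsto \mathbb{Q}_x^{x_0,\varphi}$ built into Proposition \ref{NRY4}. Recall that for $x \neq x_0$ we have $\varphi_{x_0,\alpha}(x) = L_{x_0}(x,\alpha)/\beta(x_0)$, so a decomposition $\varphi(x) = \int L_{x_0}(x,\alpha)\, d\mu_{\varphi,x_0}(\alpha)$ reads equivalently $\varphi(x) = \int \varphi_{x_0,\alpha}(x)\, d\mu(\alpha)$ with $\mu := \beta(x_0)\, \mu_{\varphi,x_0}$, a finite measure on $\partial_m E$.

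For the forward direction, fix $x_0 \in E$ and represent an arbitrary family in $\mathcal{Q}$ as $(\mathbb{Q}_x^{x_0,\varphi})_{x \in E}$. Apply Proposition \ref{decompositionphi} to $\varphi$ to obtain $\mu$. For any $n \geq 0$ and any nonnegative $\mathcal{F}_n$-measurable functional $F_n$, the third item of Proposition \ref{NRY4} combined with Fubini yields
\[
\mathbb{Q}_x^{x_0,\varphi}\!\left(F_n \mathds{1}_{\forall k \geq n,\, X_k \neq x_0}\right) = \mathbb{E}_x[F_n \varphi(X_n)] = \int_{\partial_m E} \mathbb{E}_x[F_n\, \varphi_{x_0,\alpha}(X_n)]\, d\mu(\alpha) = \int_{\partial_m E} \mathbb{Q}_x^\alpha\!\left(F_n \mathds{1}_{\forall k \geq n,\, X_k \neq x_0}\right) d\mu(\alpha).
\]
Both sides define finite measures on the set $\{\forall k \geq n, X_k \neq x_0\}$, so the monotone class theorem extends the equality to all events of the form $A \cap \{\forall k \geq n, X_k \neq x_0\}$ with $A \in \mathcal{F}_\infty$. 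Taking the increasing union over $n$ and invoking transience under both $\mathbb{Q}_x$ (by definition of $\mathcal{Q}$) and $\mathbb{Q}_x^\alpha$ (by Proposition \ref{NRY4}) propagates the identity to all of $\mathcal{F}_\infty$.

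For the converse, given a finite measure $\mu$ on $\partial_m E$, set $\varphi(x) := \int \varphi_{x_0,\alpha}(x)\, d\mu(\alpha)$. Nonnegativity, the vanishing $\varphi(x_0)=0$, and harmonicity at every $x \neq x_0$ all pass under the integral from the corresponding properties of each $\varphi_{x_0,\alpha}$; pointwise finiteness of $\varphi$ follows from continuity of $\alpha \mapsto L_{x_0}(x,\alpha)$ on the compact $\overline{E}$ and finiteness of $\mu$. Reversing the chain of equalities above then identifies the family $\bigl(\int \mathbb{Q}_x^\alpha\, d\mu(\alpha)\bigr)_{x \in E}$ with $(\mathbb{Q}_x^{x_0,\varphi})_{x \in E}$, which lies in $\mathcal{Q}$.

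For uniqueness of $\mu$, testing on $A = \{\forall k \geq 0, X_k \neq x_0\}$ rewrites the equality of two representations as $\int \varphi_{x_0,\alpha}(x)\, d\mu(\alpha) = \int \varphi_{x_0,\alpha}(x)\, d\mu'(\alpha)$ for every $x \neq x_0$, that is, two decompositions of the same $\varphi$ as in Proposition \ref{decompositionphi}, forcing $\mu = \mu'$. The only delicate technical point is the measurability of $\alpha \mapsto \mathbb{Q}_x^\alpha(A)$ needed in the Fubini step: on cylindrical events it reduces to a finite linear combination of the continuous maps $\alpha \mapsto L_{x_0}(y,\alpha)$, and a standard monotone class argument transports measurability to all of $\mathcal{F}_\infty$.
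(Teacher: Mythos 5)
Your proposal is correct and follows essentially the same route as the paper: decompose $\varphi$ via Proposition \ref{decompositionphi}, pass the integral through $\mathbb{E}_x[F_n\varphi(X_n)]$ by Fubini, and extend from the sets $\{\forall k\geq n,\ X_k\neq x_0\}$ by monotone class and transience, with the converse obtained by reversing the computation. Your uniqueness step is a mild streamlining (testing directly on $A=\{\forall k\geq 0,\ X_k\neq x_0\}$ rather than on $\{X_n=y\}\cap\{\forall k\geq n,\ X_k\neq x_0\}$ and invoking irreducibility), but both reduce to the uniqueness clause of Proposition \ref{decompositionphi}, and your explicit remark on the measurability of $\alpha\mapsto\mathbb{Q}^{\alpha}_x(A)$ fills a point the paper leaves implicit.
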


\begin{proof}
 Let us assume $\mathbb{Q}_x = \mathbb{Q}^{x_0, \varphi}_x$ for all $x \in E$. 
 By Proposition \ref{decompositionphi}, there exists a finite measure $\mu_{\varphi,x_0}$ on 
 $\partial_m E$ such that for all $x \in E$, 
 $$\varphi(x) = \mathds{1}_{x \neq x_0} 
 \int_{\partial_m E} L_{x_0}(x,\alpha) d \mu_{\varphi,x_0} (\alpha).$$
 For all $n \geq 1$, and for all nonnegative, $\mathcal{F}_n$-measurable functionals $F_n$, one has 
 \begin{align}
  \mathbb{Q}_x [F_n \mathds{1}_{\forall k \geq n, X_k \neq x_0} ]
  & = \mathbb{E}_x [ F_n \varphi(X_n)]  
\nonumber  \\ & = \mathbb{E}_x \left[ F_n \mathds{1}_{X_n \neq x_0}
  \int_{\partial_m E} L_{x_0}(X_n,\alpha) d \mu_{\varphi,x_0} (\alpha) \right] 
 \nonumber  \\ & =  \mathbb{E}_x \left[ F_n \int_{\partial_m E} \beta(x_0) \varphi_{x_0, \alpha}(X_n) d \mu_{\varphi,x_0} (\alpha) \right] 
\nonumber \\ & =  \int_{\partial_m E} \mathbb{E}_x[F_n \varphi_{x_0, \alpha}(X_n)] d (\beta (x_0) \mu_{\varphi,x_0} (\alpha))
\nonumber \\ & = \int_{\partial_m E}  \mathbb{Q}^{\alpha} \left[F_n \mathds{1}_{\forall k \geq n, X_k \neq x_0} \right]
d (\beta (x_0) \mu_{\varphi,x_0} (\alpha)) \label{5}
  \end{align}
Using the monotone class theorem and the fact that the canonical process is transient 
under $\mathbb{Q}_x$ and $\mathbb{Q}^{\alpha}$, one deduces, for all $A \in \mathcal{F}_{\infty}$, 
$$\mathbb{Q}_x(A) = \int_{\partial_m E} \mathbb{Q}^{\alpha}_x (A) d \mu(\alpha),$$
where the measure 
$$\mu := \beta(x_0) \mu_{\varphi,x_0} (\alpha)$$ is
 finite.
Let us prove the uniqueness of $\mu$. If for two finite measures $\mu$ and $\nu$, and 
for all $A \in \mathcal{F}_{\infty}$,
$$\int_{\partial_m E} \mathbb{Q}^{\alpha}_x (A) d \mu(\alpha) 
= \int_{\partial_m E} \mathbb{Q}^{\alpha}_x (A) d \nu(\alpha),$$
then for  $x_0 \in E$, 
$$ \int_{\partial_m E} \mathbb{Q}^{x_0, \varphi_{x_0, \alpha}}_x (A) d \mu(\alpha) 
= \int_{\partial_m E} \mathbb{Q}^{x_0, \varphi_{x_0, \alpha}}_x (A) d \nu(\alpha),$$
and for all $n \geq 1$,  $B_n \in \mathcal{F}_n$, one gets, by taking 
$A = B_n \cap \{\forall k \geq n, X_k \neq x_0\}$,
$$ \int_{\partial_m E} \mathbb{E}_x [ \mathds{1}_{B_n} \varphi_{x_0, \alpha}(X_n)]  d \mu(\alpha) 
= \int_{\partial_m E} \mathbb{E}_x [ \mathds{1}_{B_n} \varphi_{x_0, \alpha}(X_n)]  d \nu(\alpha),$$
i.e. 
$$\mathbb{E}_x [ \mathds{1}_{B_n} \varphi_1 (X_n)] = \mathbb{E}_x [ \mathds{1}_{B_n} \varphi_2 (X_n)],$$
where 
$$ \varphi_1 (x) =  \int_{\partial_m E} \varphi_{x_0, \alpha}(x)  d \mu(\alpha) $$
and 
$$  \varphi_2 (x) =  \int_{\partial_m E} \varphi_{x_0, \alpha}(x)  d \nu(\alpha).$$
For $y \in E$, taking $B_n = \{X_n = y\}$ gives
$$ \mathbb{E}_x[\mathds{1}_{X_n = y} \varphi_1 (X_n)] =  \mathbb{E}_x[\mathds{1}_{X_n = y} \varphi_2 (X_n)],$$
i.e. 
$$\varphi_1 (y) \mathbb{P}_x [ X_n = y] = \varphi_2 (y) \mathbb{P}_x [ X_n = y].$$
Since the Markov chain is irreducible, there exists $n \geq 1$ such that $\mathbb{P}_x [X_n = y ] > 0$, which implies 
that $\varphi_1(y) = \varphi_2(y)$, i.e. for all $x \in E$, 
$$  \int_{\partial_m E} \varphi_{x_0, \alpha}(x)  d \mu(\alpha) = \int_{\partial_m E} \varphi_{x_0, \alpha}(x)
 d \nu(\alpha) ,$$
and then for $x \neq x_0$, 
$$ \int_{\partial_m E} \frac{L_{x_0}(x,\alpha)}{\beta(x_0)} d \mu(\alpha) 
= \int_{\partial_m E} \frac{L_{x_0}(x,\alpha)}{\beta(x_0)} d \nu(\alpha).$$
The uniqueness given in Proposition \ref{decompositionphi} implies that $\mu = \nu$. 

It remains to show that any family $(\mathbb{Q}_x)_{x \in E}$ of measures such that for all 
$A  \in \mathcal{F}_{\infty}$, 
$$\mathbb{Q}_x(A) = \int_{\partial_m E} \mathbb{Q}^{\alpha}_x (A) d \mu(\alpha)$$
has the form $(\mathbb{Q}^{x_0, \varphi}_x)_{x \in E}$ if $\mu$ is a finite measure on $\partial_m E$. 
Indeed, by reversing the computation given in \eqref{5} and by replacing 
$\mu_{\varphi,x_0}$ by $\mu/(\beta(x_0))$, one deduces that for $F_n$ nonnegative and 
$\mathcal{F}_n$-measurable, 
$$ \mathbb{Q}_x (F_n \mathds{1}_{\forall k \geq n, X_k \neq x_0}) = 
\mathbb{Q}^{x_0, \varphi}_x (F_n \mathds{1}_{\forall k \geq n, X_k \neq x_0}),$$
where 
$$\varphi(x) := \mathds{1}_{x \neq x_0} \int_{\partial_m E} \frac{L_{x_0}(x,\alpha)}{\beta(x_0)} d \mu(\alpha).$$
Since the canonical process is transient under $\mathbb{Q}_x$ and $\mathbb{Q}^{x_0, \varphi}_x$, one deduces
that $\mathbb{Q}_x = \mathbb{Q}^{x_0, \varphi}_x$. 
\end{proof}
The result we have just proven gives a disintegration of all families of measures
in the class $\mathcal{Q}$,
in terms of the families $(\mathbb{Q}^{\alpha}_x)_{x \in E}$ for $\alpha \in 
\partial_m E$. 

\section{Convergence of the canonical process under $\mathbb{Q}_x^{\alpha}$} \label{Qalpha}
 In this section, we study the canonical trajectory under $\mathbb{Q}_x^{\alpha}$, for 
  $\alpha$ in the minimal boundary of the Markov chain corresponding to $(\mathbb{P}_x)_{x \in E}$. The main statement we 
  will prove is the following result of convergence:   
   \begin{proposition} \label{convergencetrajectories}
  For all $x \in E$, and for all $\alpha \in \partial_m E$, $(\mathbb{Q}^{\alpha}_x)$-almost every trajectory 
  tends to $\alpha$ at infinity.
  \end{proposition}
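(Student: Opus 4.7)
The plan is to fix an arbitrary reference point $x_0 \in E$, write $\mathbb{Q}^\alpha_x = \mathbb{Q}^{x_0, \varphi_{x_0,\alpha}}_x$, and identify the law of the trajectory after its last visit to $x_0$ as a Doob $h$-transform of the $x_0$-killed Markov chain, with $h = L_{x_0}(\cdot,\alpha)$. Once this identification is made, the classical Martin boundary convergence theorem for $h$-transforms at minimal boundary points yields the conclusion.

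By the transience of the canonical process under $\mathbb{Q}^\alpha_x$ stated in Proposition \ref{NRY4}, the last visit time $g_{x_0} := \sup\{n \geq 0 : X_n = x_0\}$ is $\mathbb{Q}^\alpha_x$-almost surely finite, with the convention $g_{x_0} = -\infty$ if $x_0$ is never visited. The events $\{g_{x_0} = m\}$ for $m \in \{-\infty\} \cup \mathbb{N}_0$ form a countable partition of the sample space (modulo a null set), so it is enough to prove the convergence $X_n \to \alpha$ on each of them.

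For fixed $m \geq 0$, $p \geq 1$ and $y_1, \dots, y_p \in E \setminus \{x_0\}$, we apply the third item of Proposition \ref{NRY4} with $n = m+p$ and $F_n = \mathds{1}_{X_m = x_0,\, X_{m+1} = y_1,\, \dots,\, X_{m+p} = y_p}$; combining this with the Markov property of $\mathbb{P}_x$ at time $m$ and noting that $\varphi_{x_0, \alpha}(y_p) = L_{x_0}(y_p, \alpha)/\beta(x_0)$, we obtain
$$\mathbb{Q}^\alpha_x\bigl(g_{x_0} = m,\, X_{m+1} = y_1, \dots, X_{m+p} = y_p\bigr) = \frac{\mathbb{P}_x(X_m = x_0)}{\beta(x_0)}\, p_{x_0,y_1} p_{y_1,y_2} \cdots p_{y_{p-1},y_p} L_{x_0}(y_p, \alpha).$$
Specializing to $p = 1$ and summing over $y_1 \neq x_0$, using $L_{x_0}(x_0,\alpha) = 1$ together with the harmonicity at $x_0$ of $L_{x_0}(\cdot,\alpha)$ for the killed chain, the total mass of $\{g_{x_0} = m\}$ equals $\mathbb{P}_x(X_m = x_0)/\beta(x_0)$. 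Dividing, the conditional law of $(X_{m+1}, X_{m+2}, \dots)$ given $\{g_{x_0} = m\}$ is exactly that of the Doob $h$-transform, started at $x_0$, of the sub-Markov chain with transitions $p_{y,z}\mathds{1}_{z \neq x_0}$, with $h = L_{x_0}(\cdot,\alpha)$. An analogous computation, based on the same identity with the factor $\mathds{1}_{\forall 0 \leq k \leq n,\, X_k \neq x_0}$ absorbed into $F_n$, shows that on the event $\{g_{x_0} = -\infty\}$ (nonempty only when $x \neq x_0$), the trajectory is distributed as the same $h$-transform but started from $x$.

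The conclusion then follows from a classical theorem of Martin boundary theory for transient Markov chains: the Doob $h$-transform associated with a minimal Martin kernel $h = L_{x_0}(\cdot, \alpha)$ converges almost surely to $\alpha$ in the Martin compactification (see \cite{D}, \cite{H}, \cite{KS}, or \cite{W}, Section 7.H). The main obstacle is thus the last-exit identification above, which requires careful bookkeeping with the characterizing identity from Proposition \ref{NRY4} and a suitable partition by the value of $g_{x_0}$; once that identification is in place, invoking the classical convergence result for minimal $h$-transforms is routine.
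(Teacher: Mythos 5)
Your proof is correct, but it takes a genuinely different route from the paper. The paper never conditions on the last exit time: instead it tilts $\mathbb{Q}^{\alpha}_x$ by $r^{L^{x_0}_{\infty}}$ for a parameter $r \in (0,1)$ to produce an equivalent \emph{probability} measure $\mathbb{P}^{x_0,\alpha,r}_x$, shows that this is the law of an honest transient Markov chain with explicit transitions, computes its Martin kernel $K_{x_0,\alpha,r}$ in terms of $L_{x_0}$, proves that the Martin and minimal boundaries of the tilted chain coincide canonically with those of the original recurrent chain, identifies the constant function $1$ as the minimal harmonic function attached to $\alpha$, and only then invokes the Kemeny--Snell convergence theorem, transferring the conclusion back to $\mathbb{Q}^{\alpha}_x$ by mutual absolute continuity. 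Your argument instead partitions the (transient) trajectory space by the value of $g_{x_0} \in \{-\infty\} \cup \mathbb{N}_0$, uses the characterizing identity of Proposition \ref{NRY4} to compute the finite-dimensional marginals of the post-$g_{x_0}$ trajectory, and recognizes the conditional law on each piece as the Doob $h$-transform of the $x_0$-killed substochastic chain with $h = L_{x_0}(\cdot,\alpha)$; your normalization check (the mass of $\{g_{x_0}=m\}$ being $\mathbb{P}_x(X_m = x_0)/\beta(x_0)$, via $L_{x_0}(x_0,\alpha)=1$ and harmonicity of $L_{x_0}(\cdot,\alpha)$ at $x_0$ for the killed transitions) is exactly right, as is the observation that the case $h(x)=0$ on $\{g_{x_0}=-\infty\}$ is a null event. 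Your approach is shorter and conceptually cleaner, since the Martin compactification of the paper's Section \ref{martin} \emph{is} by construction that of the killed chain, so no boundary identification is needed; the only points to keep straight are that the classical Doob--Hunt theorem must be applied to a substochastic kernel (harmless, since $h$ harmonic makes the $h$-process honest) and that the cylinder events you use do generate the trace $\sigma$-algebra on each $\{g_{x_0}=m\}$. What the paper's longer route buys is the family of explicit equivalent probability measures $\mathbb{P}^{x_0,\alpha,r}_x$ and the identification of their Martin boundary, which are of independent interest beyond this proposition.
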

  \begin{proof}
  The proof of this statement will be done in several steps. 
  A difficulty in the study of $\mathbb{Q}_x^{\alpha}$ is the fact that this measure 
  is not finite in general. Hopefully, 
  $\mathbb{Q}_x^{\alpha}$ can be proven to be equivalent to probability measures, which can be explicitly
  described. Moreover, one can choose such a probability measure, in such a way that the corresponding random 
  trajectory is a transient Markov chain. 
 \begin{proposition}
  For $r \in (0,1)$, $x, x_0 \in E$, $\alpha \in \partial_m E$, let 
  $$\psi_{x_0,\alpha,r}(x) := \frac{1}{\beta(x_0)} \left[ \frac{r}{1-r} + L_{x_0}(x,\alpha) \mathds{1}_{x \neq x_0}
  \right].$$ 
  Then, if $L_{\infty}^{x_0}$ denotes the total number of hitting time of $x_0$ by the canonical trajectory, 
  the measure 
  $$\mathbb{P}^{x_0,\alpha,r}_x := \frac{r^{L_{\infty}^{x_0}}}{\psi_{x_0,\alpha,r}(x)} \cdot \mathbb{Q}_x^{\alpha}$$
  is the probability distribution of a Markov chain, starting at $x$, with transition probabilities
  $(q^{x_0, \alpha,r}_{x,y})_{x,y \in
  E}$, where 
  $$q^{x_0, \alpha,r}_{x,y} = \frac{ \psi_{x_0,\alpha,r} (y)}{ \psi_{x_0,\alpha,r} (x)} p_{x,y}$$
  if $x  \neq x_0$, and 
  $$ q^{x_0, \alpha,r}_{x_0,y} = r \frac{ \psi_{x_0,\alpha,r} (y)}{ \psi_{x_0,\alpha,r} (x_0)} p_{x_0,y}.$$
 \end{proposition}
\begin{proof}
 The discussion at the beginning of Chapter 4 of \cite{NRY} shows the following: if $\varphi(x_0) = 0$ and 
 $\varphi$ is harmonic at all points different from $x_0$, then 
 for $$\psi_r (x) := \frac{r}{1-r} \mathbb{E}_{x_0} [\varphi(X_1)] + \varphi(x),$$
 the measure 
 $$\mu^{(r)}_x := r^{L_{\infty}^{x_0}} \cdot \mathbb{Q}_x^{x_0, \varphi}$$
 is finite and satisfies, for all $n \geq 0$, and for all $F_n$ nonnegative, $\mathcal{F}_n$-measurable, 
 $$\mu^{(r)}_x (F_n) := \mathbb{E}_x [ \psi_r (X_n) r^{L_{n-1}^{x_0}} F_n ],$$
 where $L_{n-1}^{x_0}$ is the number of hitting times of $x_0$ at or before time $n-1$. 
 
 In the case we consider here, we have 
 $$\varphi(x) = \varphi_{x_0, \alpha} (x) = \frac{L_{x_0}(x,\alpha)}{\beta(x_0)} \mathds{1}_{x \neq x_0},$$
 and by applying Proposition \ref{decompositionphi} to $\mu_{\varphi, x_0}$ equal to $1/\beta(x_0)$ times the 
 Dirac mass at $\alpha$, 
 $$\mathbb{E}_{x_0} [\varphi(X_1)] = \frac{1}{\beta(x_0)},$$
 the total mass of $\mu_{\varphi, x_0}$.
 Hence, 
$$\psi_r (x) =  \frac{r}{(1-r) \beta(x_0)} +   \frac{L_{x_0}(x,\alpha)}{\beta(x_0)} \mathds{1}_{x \neq x_0}
 = \psi_{x_0,\alpha,r}(x),$$ 
 $$ \mu^{(r)}_x = \psi_{x_0,\alpha,r}(x) \cdot \mathbb{P}^{x_0,\alpha,r}_x,$$
 and then for $n \geq 0$, $F_n$ nonnegative and $\mathcal{F}_n$-measurable, 
 $$ \mathbb{P}^{x_0,\alpha,r}_x (F_n) = \mathbb{E}_x \left[ \frac{\psi_{x_0,  \alpha, r}(X_n)}{\psi_{x_0,
 \alpha, r}(x)}
  r^{L_{n-1}^{x_0}} F_n \right].$$
  Taking $n = 0$ and $F_n = 1$, we deduce that $\mathbb{P}^{x_0,\alpha,r}_x$ is a probability measure. 
 Moreover, for all $y_0, y_1, \dots, y_n \in E$, 
 \begin{align*}
\mathbb{P}^{x_0,\alpha,r}_x (X_0 = y_0, \dots, X_n = y_n) 
& = \mathds{1}_{y_0 = x} \, \left(\prod_{j=0}^{n-1} p_{y_j,y_{j+1}}  \right) 
\, \frac{\psi_{x_0,  \alpha, r} (y_n)}{\psi_{x_0,  \alpha, r} (x)} \, r^{\sum_{j=0}^{n-1} \mathds{1}_{y_j = x_0}}
\\ & = \mathds{1}_{y_0 = x}\left(\prod_{j=0}^{n-1} p_{y_j,y_{j+1}}  \right) \, 
\left(\prod_{j=0}^{n-1}\frac{\psi_{x_0,  \alpha, r} (y_{j+1})}{\psi_{x_0,  \alpha, r} (y_j)}    \right) 
\left(\prod_{j=0}^{n-1} r^{\mathds{1}_{y_j = x_0}} \right)
\\ & = \mathds{1}_{y_0 = x} \, \prod_{j=0}^{n-1} \left(  p_{y_j,y_{j+1}} 
 \frac{\psi_{x_0,  \alpha, r} (y_{j+1})}{\psi_{x_0,  \alpha, r} (y_j)} r^{\mathds{1}_{y_j = x_0}} \right)
  \\ & = \mathds{1}_{y_0 = x} \, \prod_{j=0}^{n-1}   q^{x_0, \alpha,r}_{y_j,y_{j+1}},
  \end{align*}
which proves the desired result. 
\end{proof}
Since the canonical trajectory is transient under $\mathbb{Q}_x^{\alpha}$, 
it is also transient under 
$\mathbb{P}_x^{x_0,\alpha,r}$, since the two measures are absolutely continuous with respect to each other. Hence, one can consider the Martin boundary of the corresponding transient Markov chain. 
If we take $x_0$ as the reference point, we need to consider the Green function 
$G_{x_0, \alpha, r}$ given by
$$G_{x_0, \alpha, r} (x,y)
:= \sum_{k = 0}^{\infty}
 \mathbb{P}_x^{x_0,\alpha,r} (X_k = y),$$
 and the function 
 $K_{x_0, \alpha, r}$ given by 
 $$K_{x_0, \alpha, r}(x,y) := \frac{G_{x_0, \alpha, r} (x,y)}{G_{x_0, \alpha, r} (x_0,y)}.$$
 One then gets the following result: 
  \begin{proposition} \label{equalitymartin} 
  For all $x, y \in E$, one has:
  $$K_{x_0, \alpha, r}(x,y)
  = \frac{\psi_{x_0, \alpha, r} 
  (x_0)}{\psi_{x_0, \alpha, r} 
  (x)} \left(1 + \frac{1-r}{r}
  L_{x_0} (x,y) \mathds{1}_{x \neq x_0}
 \right).$$
 The Martin boundary associated to the 
  transient Markov chain corresponding to $(\mathbb{P}_x^{x_0,\alpha,r})_{x \in E}$ is 
 canonically homeomorphic to the Martin 
 boundary associated to the recurrent Markov chain corresponding to $(\mathbb{P}_x)_{x \in E}$, and the analogous statement is true if we replace the Martin boundary by the minimal boundary. Moreover, the function $1$ is a minimal harmonic function,
 for the Markov chain given by $(\mathbb{P}_x^{x_0, \alpha, r})_{x \in E}$,
 which corresponds to the point $\alpha$ of the minimal boundary $\partial_m E$.  
  \end{proposition}
  \begin{proof}
  For $x, y \in E$, $n \geq 1$,
  \begin{align*}\mathbb{P}_x^{x_0,\alpha,r}
  (X_n = y) & = \mathbb{E}_x 
  \left[ \frac{\psi_{x_0, \alpha, r} 
  (X_n)}{\psi_{x_0, \alpha, r} 
  (x)} r^{L_{n-1}^{x_0}} \mathds{1}_{X_n = y} \right]
   \\ & =  \frac{\psi_{x_0, \alpha, r} 
  (y)}{\psi_{x_0, \alpha, r} 
  (x)}  \mathbb{E}_x 
  \left[  r^{L_{n-1}^{x_0}} \mathds{1}_{X_n = y} \right],
  \end{align*}
  and then 
  $$G_{x_0, \alpha, r} (x,y) 
    = \frac{\psi_{x_0, \alpha, r} 
  (y)}{\psi_{x_0, \alpha, r} 
  (x)}  \mathbb{E}_x \left[ 
  \sum_{n=0}^{\tau^{x_0}_1} \mathds{1}_{X_n = y} + \sum_{p=1}^{\infty} 
  r^p \sum_{n=\tau^{x_0}_p + 1}^{\tau^{x_0}_{p+1}} \mathds{1}_{X_n = y} 
  \right].$$
  Using the strong Markov property, one deduces 
  $$ G_{x_0, \alpha, r} (x,y) 
    = \frac{\psi_{x_0, \alpha, r} 
  (y)}{\psi_{x_0, \alpha, r} 
  (x)} \left(  \mathbb{E}_x \left[L^y_{T_{x_0}} \right] 
  + \frac{r}{1-r}  \mathbb{E}_{x_0} 
  \left[L^y_{T'_{x_0}} - L^y_0 \right]
  \right). $$
  Now, under $\mathbb{P}_{x}$,
  $$L_{T_{x_0}}^y =  
  L_{T_{x_0}-1}^y + \mathds{1}_{y = x_0}
 =  L_{T'_{x_0}-1}^y \mathds{1}_{x \neq x_0}
  + \mathds{1}_{y = x_0} $$
  and under $\mathbb{P}_{x_0}$, 
  $$L_{T'_{x_0}}^y - L_0^y
  = \left(L_{T'_{x_0}-1}^y + \mathds{1}_{y = x_0}\right) - \mathds{1}_{y = x_0}
  = L_{T'_{x_0}-1}^y.$$
  Hence, 
  $$ G_{x_0, \alpha, r} (x,y) 
  =  \frac{\psi_{x_0, \alpha, r} 
  (y)}{\psi_{x_0, \alpha, r} 
  (x)} \left( \mathds{1}_{y = x_0} 
  + G_{x_0} (x,y) \mathds{1}_{x \neq x_0} + 
  \frac{r}{1-r} G_{x_0}(x_0, y) \right).$$
  In particular, 
  $$ G_{x_0, \alpha, r} (x_0,y)
  = \frac{\psi_{x_0, \alpha, r} 
  (y)}{\psi_{x_0, \alpha, r} 
  (x_0)} \left( \mathds{1}_{y = x_0} 
   + 
  \frac{r}{1-r} G_{x_0}(x_0, y) \right).$$
 Taking the quotient of the two expressions, we get, 
  after dividing the numerator and the denominator by $ r G_{x_0} (x_0,y)/(1-r)$,  and by checking separately the cases 
  $y \neq x_0$ and $y = x_0$, 
  $$K_{x_0, \alpha, r}(x,y)
  = \frac{\psi_{x_0, \alpha, r} 
  (x_0)}{\psi_{x_0, \alpha, r} 
  (x)} \left(1 + \frac{1-r}{r}
  L_{x_0} (x,y) \mathds{1}_{x \neq x_0}
 \right).$$
For $x, (y_n)_{n \geq 1}$ in $E$, 
it is then clear that 
$K_{x_0, \alpha, r}(x,y_n)$ converges if 
and only if 
$ L_{x_0} (x,y_n)$ converges: this 
equivalence is also true for $x = x_0$, 
since 
$K_{x_0, \alpha, r}(x_0,y_n)
= L_{x_0} (x_0, y_n) = 1$. 
This equivalence implies the equality, up 
to a canonical homeomorphism, 
 of the Martin boundaries
associated to $(\mathbb{P}_x)_{x \in E}$ 
and $(\mathbb{P}^{x_0, \alpha,r}_x)_{
x \in E}$. 

Let us now check the equality of the minimal boundaries. 
It is straightforward to check that there 
is a bijective map $\mathcal{R}$ from the set of functions $\varphi$ from $E$ to $\mathbb{R}$  
for which $\varphi(x_0)  = 0$ and 
$\mathbb{E}_x [\varphi(X_1)] = \varphi(x)$ if $x \neq x_0$, to the set of functions $h$ from $E$ to $\mathbb{R}$ which are harmonic with respect to the Markov chain
associated to $(\mathbb{P}^{x_0, \alpha,r}_x)_{
x \in E}$. This map is given as follows: 
$$\mathcal{R} (\varphi) (x) 
= \frac{1}{\psi_{x_0, \alpha,r}(x)} 
\left( \varphi(x) + \frac{r}{1-r} 
\mathbb{E}_{x_0} [\varphi(X_1)] \right), $$
and one has 
$$\mathcal{R}^{-1} (h)(x) = 
\psi_{x_0, \alpha,r}(x) h (x) - 
r \mathbb{E}_{x_0} [\psi_{x_0, \alpha,r}(X_1) h (X_1)].$$ 
It is obvious that $\mathcal{R}$ and $\mathcal{R}^{-1}$ are linear maps,
and that $\mathcal{R}$ sends nonnegative functions to nonnegative functions. Moreover this last property is also true 
for $\mathcal{R}^{-1}$. Indeed, if $h$ 
is nonnegative, then $\mathcal{R}^{-1}(h)$ is 
harmonic for $(\mathbb{P}_x)_{x \in E}$ 
at every point except $x_0$, vanishes
 at $x_0$ and is bounded from below by 
$$ - C := - r \mathbb{E}_{x_0} [\psi_{x_0, \alpha,r}(X_1) h (X_1)].$$ 
Hence, for all $x \in E$, 
$$\mathcal{R}^{-1}(h)(x) 
= \mathbb{E}_x [ \mathcal{R}^{-1}(h)(X_{
n \wedge T_{x_0}})]
= \mathbb{E}_x[ \mathcal{R}^{-1}(h)(X_{n})
\mathds{1}_{T_{x_0} > n} ] 
\geq -C \mathbb{P}_x[ T_{x_0} > n],$$
and letting $n \rightarrow \infty$, 
$$\mathcal{R}^{-1}(h)(x) \geq 0,$$
since by the recurrence of the canonical process under $\mathbb{P}_x$, 
$$\mathbb{P}_x [T_{x_0} > n ] 
\underset{n \rightarrow \infty}{\longrightarrow} 0.$$
One deduces that $\mathcal{R}$ and $\mathcal{R}^{-1}$ preserve the minimality of the corresponding harmonic functions. 
Moreover, one has for all $\gamma \in \partial_m E$, 
\begin{align*}\mathcal{R} \left(\varphi_{x_0, \gamma} \right) (x) & = 
\frac{1}{  \psi_{x_0, \alpha, r} (x)} \left( \frac{1}{\beta(x_0)} L_{x_0}(x,\gamma) \mathds{1}_{x 
\neq x_0} + \frac{r}{1-r} \mathbb{E}_{x_0}
[ \varphi_{x_0, \gamma} (X_1)] \right)
\\ & = \frac{1}{  \beta(x_0)\psi_{x_0, \alpha, r} (x)} \left( L_{x_0}(x,\gamma) \mathds{1}_{x 
\neq x_0} + \frac{r}{1-r} \right)
\\ & = \frac{r}{(1-r)\beta(x_0)\psi_{x_0, \alpha, r} (x_0)} K_{x_0, \alpha, r} (x,\gamma).
\end{align*}
In the second equality, we use that 
$$\mathbb{E}_{x_0}
[ \varphi_{x_0, \gamma} (X_1)] = 
\frac{1}{\beta(x_0)},$$
which is a consequence of Proposition 
\ref{decompositionphi} applied to
$1/\beta(x_0)$ times the Dirac measure at 
$\gamma$. 
Since $\gamma \in \partial_m E$, $\varphi_{x_0, \gamma}$ is minimal as a nonnegative function vanishing at $x_0$ and $(\mathbb{P}_x)_{x 
\in E}$-harmonic outside $x_0$, 
$\mathcal{R}(\varphi_{x_0, \gamma})$
is then minimal as a $(\mathbb{P}_x^{x_0, \alpha, r})_{x \in E}$-harmonic function, 
and by the previous computation, 
$x \mapsto K_{x_0, \alpha, r} (x, \gamma)$ si also minimal, which implies that $\gamma$ is also in the minimal boundary 
of $E$ for the transient Markov chain 
$(\mathbb{P}^{x_0, \alpha, r}_x)_{x \in E}$. Using the reverse map $\mathcal{R}^{-1}$, we 
deduce similarly that any point in the minimal boundary for $(\mathcal{P}^{x_0, \alpha,r}_x)_{x \in E}$ is also in the minimal boundary for $(\mathbb{P}_x)_{x \in E}$. 
We have then the identity (up to canonical homeomorphism) between the two minimal boundaries. 

Moreover, for $\gamma = \alpha$, we get the following: 
$$ K_{x_0, \alpha, r} (x, \alpha) =  \frac{\psi_{x_0, \alpha, r} (x_0)}{\psi_{x_0, \alpha, r} (x)} 
\left( 1 + \frac{1 - r}{r} L_{x_0} (x, \alpha) \mathds{1}_{x \neq x_0} \right)
= \left( \frac{1-r}{r} \right) \beta(x_0) \psi_{x_0, \alpha, r} (x_0). 
$$
Hence, if we refer to Proposition \ref{31}, the constant function equal to $1$ can be written as follows: 
$$1 = \int_{\partial_m E}   K_{x_0, \alpha, r} (x, \beta) d \mu(\beta)$$
where 
$$\mu =  \frac{r}{(1-r) \beta(x_0) \psi_{x_0, \alpha, r} (x_0)} \delta_{\alpha},$$
$\delta_{\alpha}$ denoting the Dirac measure at $\alpha$. 
Since $\mu$ is carried by $\alpha$, and $\alpha \in \partial_m E$ by assumption, the last statement
of Proposition \ref{equalitymartin} is proven. 
  \end{proof}
We can now easily finish the proof of Proposition \ref{convergencetrajectories}. 
 Applying Theorem 3.2. of Kemeny and Snell \cite{KS} to the transient Markov chain associated to 
 $\mathbb{P}_x^{x_0, \alpha, r}$ and to the constant harmonic function $h = 1$, and using the last statement of  
 Proposition \ref{equalitymartin}, we deduce that $\mathbb{P}_x^{x_0, \alpha, r}$-almost surely, 
 the canonical trajectory tends to $\alpha$. Since $\mathbb{Q}^{\alpha}_x$ is absolutely continuous 
 with respect to $\mathbb{P}_x^{x_0, \alpha, r}$ (with density $r^{- L_{\infty}^{x_0}}$), the canonical trajectory
 also tends to $\alpha$ under $\mathbb{Q}^{\alpha}_x$.
\end{proof}
From the fact that $(\mathbb{Q}_x^{\alpha})_{x \in E}$ satisfies the condition \eqref{1} and from
 Proposition 
\ref{convergencetrajectories}, we deduce the following informal interpretation: under 
$\mathbb{Q}_x^{\alpha}$, the canonical process corresponds to the Markov chain given by $\mathbb{P}_x$, 
conditioned to tend to $\alpha$ at infinity. Of course, this interpretation is not rigorous since 
$\mathbb{Q}_x^{\alpha}$ is not a probability measure in general, and even not a finite measure. Moreover, 
under $\mathbb{P}_x$, the canonical process is recurrent, so it cannot converge to a point of the Martin boundary.  
 \section{Some examples} \label{examples}
In this section, we look again at the  examples given in Chapter 4 of \cite{NRY}. 
\subsection{The simple random walk on $\mathbb{Z}$} 
The simple random walk on $\mathbb{Z}$
is the Markov chain given by the transition probabilities 
$(p_{x,y})_{x, y \in \mathbb{Z}}$ where 
$p_{x,x+1} = p_{x, x-1} = 1/2$ and 
$p_{x,y} = 0$ if $|y-x| \neq 1$. For all 
$x \in \mathbb{Z}$, $p_{x,y} = 0$ for all 
but finitely many $y \in \mathbb{Z}$, 
and the simple random walk is irreducible and recurrent. We can then do the construction given in Chapter 4 of \cite{NRY} and in the present article. 
If we take $x_0 = 0$, we get, by using 
standard 
martingale arguments,
$$ G_0(0,y) = 1$$
for all $y \in \mathbb{Z}$, 
and 
for all $x \in \mathbb{Z} \backslash \{0\}$ and $y \in \mathbb{Z}$,
$$G_0(x,y) = 2(|x| \wedge |y|)\mathds{1}_{xy > 0}.$$
Hence, $L_0(0,y) = 1$ and for $x  \neq 0$,$$L_0(x,y) = 2(|x| \wedge |y|)\mathds{1}_{xy > 0}.$$
We deduce that the Martin boundary of the
standard random walk has exactly 
two points. We denote these points $-\infty$ and $\infty$, the distinction 
between them being given by the formulas:  
$$L_0(x, \infty) = 2 x_+ + \mathds{1}_{x = 0}, \; 
L_0(x, -\infty) = 2 x_- + \mathds{1}_{x = 0}.$$
This notation is justified by the following fact: a sequence of points in 
$\mathbb{Z}$ tends to $\infty$ in the 
Martin compactification of $\mathbb{Z}$ if 
and only if it tends to $\infty$ in the 
usual sense, and the similar statement is 
true for $-\infty$.  
If we normalize the stationnary measure 
by taking $\beta(x) = 1$ for all $x \in 
\mathbb{Z}$, we get 
$$\varphi_{0,\infty} (x) = 2 x_+, 
\varphi_{0,-\infty} (x) = 2 x_-.$$
The nonnegative functions $\varphi$ such that $\varphi(0) = 0$ and $\varphi$ is 
harmonic at all $x \neq 0$ are exactly 
the linear combinations of 
$\varphi_{0,\infty}$ and $\varphi_{0,-\infty}$ with nonnegative coefficients. 
We deduce that the minimal boundary of 
$\mathbb{Z}$  
is equal to its Martin boundary, i.e. 
has the two points $-\infty$ and $\infty$. 
The families of $\sigma$-finite measures in the class $\mathcal{Q}$ are then exactly the families 
of the form $(\alpha \mathbb{Q}_x^{\infty}
+ \beta \mathbb{Q}_x^{-\infty})_{x \in 
\mathbb{Z}}$ for $\alpha, \beta \geq 0$. 
Hence, we do not obtain other measures than those given in Subsection 4.3.1 of \cite{NRY}. 
\subsection{The simple random walk in 
$\mathbb{Z}^2$}
In this case, we have $E = \mathbb{Z}^2$ 
and the transition probabilities are given 
by $p_{x,y} = 1/4$ if $||x-y|| = 1$ and 
$p_{x,y} = 0$ otherwise. It has been 
shown that in this situation, 
there exists, up to a multiplicative
constant, a unique nonnegative function 
which vanishes at $(0,0)$ and 
which is harmonic everywhere else. 
This property is, for example, stated
in Section 31  of \cite{S} (statement P3), in the case where we replace the simple random walk by 
a general irreducible, recurrent, aperiodic Markov chain in $\mathbb{Z}^2$, 
for which the increments are i.i.d. random 
variables. Since the simple random 
walk is not aperiodic, the result in \cite{S} doesn't 
apply directly. However, it is easy to  
deal with this problem: if $a$ is 
a nonnegative function, vanishing at 
$(0,0)$ and harmonic elsewhere for the 
transitions $(p_{x,y})_{x,y \in \mathbb{Z}^2}$, then for 
$E' := \{(a,b) \in \mathbb{Z}^2, 
 a + b \,  \operatorname{ even }\}$,
the restriction of $a$ to $E'$ is  harmonic, except at $(0,0)$, for the transition $p^2$ obtained
by iterating two steps of the Markov 
chain with transition $p$, i.e.
$(p^2)_{x,y}$ is $1/16$ for $||x-y|| = 2$, 
$1/8$ for $||x-y|| = \sqrt{2}$ and 
$1/4$ for $x = y$. 
This Markov chain on $E'$ is
 irreducible, recurrent, aperiodic, and  
 then the restriction of $a$ to 
 $E'$ is uniquely determined up to a 
 multiplicative constant. Now 
 for $x \in \mathbb{Z}^2 \backslash E'$, 
$a(x)$ is the average of the four numbers
$a(x \pm (0,1))$, $a(x \pm (1,0))$, where  $x \pm (0,1)$ and $x \pm (1,0)$ are in $E'$, so it 
is also uniquely determined. 
We have already written the expression 
of $a$ in Subsection 4.3.5 of \cite{NRY}: 
if the multiplicative constant is 
suitably chosen, then $a$ is the so-called 
{\it potential kernel}, given by 
$$a(x) = \underset{N \rightarrow \infty}{\lim}   \left(\sum_{n = 0}^N \mathbb{P}_{(0,0)}
(X_n = (0,0)) - \sum_{n = 0}^N \mathbb{P}_{x}
(X_n = (0,0))\right).$$
In Section 15 of \cite{S}, some explicit
values of $a$ are given. If $x = (0,0)$, 
then $a(x) = 0$, if
$||x|| = 1$, then $a(x) = 1$, and 
for all $n \geq 1$, 
$$a(\pm n, \pm n) = \frac{4}{\pi} 
\sum_{j= 1}^{n} \frac{1}{2j-1}.$$
Knowing these values is sufficient 
to successively recover all the values of $a$, 
by only using the fact that 
$a$ is harmonic, and that 
$a$ has the same symmetries as the lattice
$\mathbb{Z}^2$. For example, we get
$$4 a(1, 0) = a(2,0) + a(0,0) + 
a(1,1) + a(1,-1),$$
and then
$$a(2,0) = 4 a(1, 0) - a(0,0) - 2 a(1,1) 
= 4 - \frac{8}{\pi}.$$
Similarly, 
$$4 a(1,1) = a(1,2) + a(2,1) + a(1,0) 
+ a(0,1) = 2a(2,1) + 2a(1,0),$$
$$a(2,1) = 2 a(1,1) - a(1,0) = 
 \frac{8}{\pi} - 1$$
 and so on.
In particular, for all $x \in \mathbb{Z}^2$, $a(x) \in \mathbb{Q} + \frac{1}{\pi}\mathbb{Q}$.
The following asymptotics has been given 
by St\"ohr \cite{St}, then improved and 
generalized by Fukai and Uchiyama \cite{F}: 
$$a(x) = \frac{2}{\pi} \log ||x|| + 
\frac{2 \gamma + \log 8}{\pi} + O(1/||x||^2),$$
where $\gamma$ is the Euler-Mascheroni
constant. 

The uniqueness of $a$, up to a multiplicative constant, shows that the simple random walk in $\mathbb{Z}^2$ has a Martin boundary with 
only one point, which can naturally be 
denoted $\infty$.  
If we go back to the definition of the 
Martin boundary given here, we deduce that 
for all $x \in \mathbb{Z}^2 \backslash \{0\}$,
$$\frac{G_{(0,0)} (x,y)}{G_{(0,0)} ((0,0),y)} \underset{||y|| \rightarrow 
\infty}{\longrightarrow} C a(x)$$
for some constant $C > 0$. Since 
the counting measure is invariant for the simple random walk we deduce that ${G_{(0,0)} ((0,0),y)} = 1$ for all 
$y \in \mathbb{Z}^2$, and  
then for $x \neq (0,0)$, 
$$\mathbb{E}_x [L_{T_{(0,0)}}^y] 
\underset{ ||y|| \rightarrow 
\infty}{\longrightarrow} C a(x).$$
Moreover, by the Markov property, for 
$y  \neq (0,0)$,
$$\mathbb{E}_{(0,0)} [L_{T'_{(0,0)}}^y]
= \frac{1}{4} \sum_{x \in \{(0,1), (0,-1), 
(1,0), (-1,0)\}} 
\mathbb{E}_x [L_{T_{(0,0)}}^y],$$
and then, by letting $||y|| \rightarrow
\infty$,
$$1 = \frac{1}{4} \sum_{x \in \{(0,1), (0,-1), 
(1,0), (-1,0)\}} C a(x) = C a(0,1),$$
and then $C = 1$, and 
$$\mathbb{E}_x [L_{T_{(0,0)}}^y] 
\underset{ ||y|| \rightarrow 
\infty}{\longrightarrow} a(x).$$

Since the Martin boundary of $E$ has 
only one point in this example, 
the class $\mathcal{Q}$ contains only
the nonegative multiples of the family 
of measures
$(\mathbb{Q}^{\infty}_x)_{x \in \mathbb{Z}^2}$. 

 The results given here on the simple random walk in $\mathbb{Z}$ or $\mathbb{Z}^2$, its 
 potential kernel and its Martin boundary
 has been adapted to more general random walks on groups. For example, see 
 Kesten \cite{K} or Kesten and Spitzer 
  \cite{KSp}.

\subsection{The "bang-bang random walk"}
This Markov chain is given in Subsection 
4.3.2 of  \cite{NRY}. We have $E = \mathbb{N}_0$, the set of nonnegative integers, and the transition probabilities
are given by $p_{0,1} = 1$, and for all $y \geq 1$,  $p_{y,y+1} = 
q \in (0,1/2)$, $p_{y,y-1} = 1-q$ and $p_{x,y} = 0$ 
for $|y - x | \neq 1$ (in \cite{NRY}, only the
case $q = 1/3$ is considered, but the generalization is straightforward).  
It is easy to check that the Markov chain is irreducible and recurrent. 
Moreover, for $\alpha := (1-q)/q$, 
$(\alpha^{X_{n \wedge T_0}})_{n \geq 0}$ is a martingale under $\mathbb{P}_x$ for
all $n \in \mathbb{N}_0$. 
By a standard martingale argument, one deduces that for $0 < x < y$, 
$$\mathbb{P}_x (T_y < T_0)
= \frac{\alpha^x - 1}{\alpha^y - 1}.
$$
Now, for all $y > 0$, under $\mathbb{P}_y$: 
\begin{itemize}
\item With probability $q$, $ X_1 = y+1$ and then the Markov chain goes 
almost surely back to $y$ before hitting $0$. 
\item With probability $1-q$, $X_1 = y-1$, and then the conditional probability that 
the Markov chain goes to $0$ before returning 
to $y$ is
$$\mathbb{P}_{y-1} (T_y > T_0)
= 1 - \frac{\alpha^{y-1} - 1}{\alpha^y - 1}.$$
\end{itemize}
Hence, the probability that the Markov hits $0$ before returning to $y$ is 
$$ \mathbb{P}_y [T_0 < \tau_2^y] 
= (1-q) \frac{\alpha^{y} - \alpha^{y-1}}
{\alpha^y - 1} = (1-q) (1 - (1/\alpha))
\frac{\alpha^{y}}{\alpha^y - 1} = 
\frac{(1-2q) \alpha^{y}}{\alpha^y - 1}.$$
If we choose $x_0 = 0$, we deduce 
$$G_0 (y,y) = \frac{\alpha^y - 1}{ 
(1 - 2q) \alpha^y} = \frac{1 - [q/(1-q)]^y}{1 - 2q}.$$
Applying the Markov property to the first hitting time of $y$, we deduce that for 
$ x \geq y > 0$, 
$$G_0(x,y) = G_0 (y,y)$$
and for $y > 0$, $x \leq y$, 
$$G_0(x,y) = \frac{\alpha^x - 1}{ 
(1 - 2q) \alpha^y}.$$
If $x > 0$, one obviously has
$$G_0(x,0) = 0,$$
one has 
$$G_0(0,0) = 1,$$
and for all $ y > 0$, 
$$G_0(0,y) = G_0(1,y) = \frac{\alpha - 1}
{(1-2q) \alpha^y} = \frac{1}{q \alpha^y}.$$
For $x, y > 0$, one gets 
$$L_0(x,y) = \frac{q (\alpha^{x \wedge y}
- 1)}{(1-2q)},$$
and 
$$L_0(0,y) = 1.$$
Hence, for all $x \in \mathbb{N}_0$,  $L_0(x,y)$ converges when $y$ goes to infinity. The Martin boundary has then 
only one point denoted $\infty$, and 
$$L_0(x,\infty) = \frac{q (\alpha^{x}
- 1)}{(1-2q)}, L_0(0, \infty) = 1.$$
In this setting, the exists a unique 
stationnary probability measure, given 
by $$\beta(0) = \frac{1 - 2q}{2(1-q)}$$
and for all $x > 0$, 
$$\beta(x) = \frac{1 - 2q}{2q(1-q) 
\alpha^x}.$$
With this normalization, we get for all 
$x \geq 1$:
$$\varphi_{0, \infty}( x) = \frac{2q(1-q)}{(1-2q)^2} (\alpha^x - 1).$$
We then get, up to a multiplicative constant, a unique family of $\sigma$-finite measures $(\mathbb{Q}_x^{\infty})_{x \in \mathbb{N}_0}$, described in Subsection 4.3.2 of \cite{NRY} in the case $q = 1/3$, and then 
$\alpha = 2$. 
\subsection{The random walk on a tree}
Here, we consider an infinite $k$-ary tree 
for $k \geq 2$. It can be represented by 
the set $E$ of all finite (possibly empty)
sequences of elements in $\{0, 1, \dots, 
k-1\}$. The transition probabilities we consider are given by $p_{\emptyset, (j)}
= 1/k$ for all $j  \in \{0, \dots, k-1\}$, 
$p_{x,y} = 1/2$, $p_{y,x} =1/2k$ if $x$ is a nonempty sequence and $y$ is obtained 
from $x$ by removing the last element: we
will say that $y$ is the father of $x$ and $x$ is a son of $y$.  All the other transition probabilities are equal to zero. 
With these transitions, under $\mathbb{P}_x$ for any $x \in E$, $(L_n)_{n \geq 1}$ is a reflected standard random walk if 
$L_n$ denotes the length of the sequence $X_n$. One deduces that the Markov chain is irreducible and recurrent. 
We choose $x_0 = \emptyset$. 
Let $x, y \in E$, $x, y  \neq \emptyset$, and let $z$ be the last common ancestor of 
$x$ and $y$.
 It is clear that under 
$\mathbb{P}_x$, the canonical process
almost surely hits $z$ before $y$. Using the strong Markov property, we deduce that 
for $z = \emptyset$,  
$$G_{\emptyset}(x,y) = 0,$$
and for $z \neq \emptyset$, 
$$G_{\emptyset}(x,y) = G_{\emptyset} (z,y).$$
Let $z_0 = \emptyset, z_1, z_2, \dots, 
z_p = y$ be the ancestors of $y$, the sequence $z_j$ having $j$ elements. 
Under $\mathbb{P}_{z_j}$, $2 \leq j \leq p-1$, $X_1 = z_{j-1}$ with probability $1/2$, 
$X_1 = z_{j+1}$ with probability $1/2k$
and $X_1$ is another son of $z_j$ with probability $(k-1)/2k$. One deduces 
$$G_{\emptyset}(z_j, y) = \frac{1}{2} G_{\emptyset} (z_{j-1}, y) + 
\frac{1}{2k} G_{\emptyset} (z_{j+1}, y) + 
\frac{k-1}{2k} G_{\emptyset}(z_j,y),$$
and then 
$$G_{\emptyset} (z_j, y) = \frac{k}{k+1} G_{\emptyset} (z_{j-1}, y) + 
\frac{1}{k+1} G_{\emptyset} (z_{j+1}, y).$$
Similarly, for $p \geq 2$, 
$$G_{\emptyset} (z_1, y) 
= \frac{1}{k+1} G_{\emptyset} (z_{2}, y)$$
and 
$$G_{\emptyset} (y,y) = 
G_{\emptyset} (z_{p},y) =  
1 + \frac{1}{2} G_{\emptyset} (z_{p},y)
+ \frac{1}{2} G_{\emptyset} (z_{p-1},y),$$
i.e. 
$$G_{\emptyset} (y,y) = 2 + 
G_{\emptyset} (z_{p-1},y).$$
Finally, for $p = 1$, 
$$G_{\emptyset} (y,y) = 1 + 
\frac{1}{2} G_{\emptyset} (y,y) = 2.$$
From these equations, we deduce for $1 \leq j \leq p$, 
$$G_{\emptyset} (z_j,y) = 
\frac{2(k^j-1)}{k^{p-1} (k-1)}.$$
Hence, for $x, y \neq \emptyset$, 
$$G_{\emptyset} (x,y)  = 
\frac{2(k^j-1)}{k^{p-1} (k-1)},$$
where $j$ denotes the number of elements 
of the last common ancestor of $x$ and $y$. Moreover, one has $G_{\emptyset} (x, 
\emptyset) = 0$ for $x \neq \emptyset$, 
$G_{\emptyset} (\emptyset, 
\emptyset) = 1$, and for $y = z_p \neq \emptyset$, 
$$G_{\emptyset} (\emptyset, y) = 
\frac{1}{k} G_{\emptyset} (z_1, y)
= \frac{2}{k^p}.$$
We then get for $x, y \neq \emptyset$,  
$$L_{\emptyset} (x, y) 
= \frac{k (k^j -1)}{k-1},$$
$$L_{\emptyset} (x, \emptyset)  = 0, \;
L_{\emptyset} (\emptyset, y) = 
L_{\emptyset} (\emptyset, \emptyset)  = 1.
$$
Now, let $(x_n)_{n \geq 1}$ be a sequence 
in $E$ which converges in its Martin 
compactification. If the length of $x_n$ 
does not tend to infinity, then $x_n$ takes the same value infinitely often, 
and then the limit of $x_n$ is equal to 
this value. If the length of $x_n$ tends 
to infinity, then for all $m \geq 1$, 
$x_n$ has at least $m$ elements for $n$ large enough. Necessarily, there exists 
a sequence $y_m$ of $m$ elements such that 
$x_n$ starts with $y_m$ infinitely often. 
Now, let us assume that another sequence $y'_m$ satisfies the same property. 
In this case, the last common ancestor 
of $x_n$ and $y_m$ is $y_m$ itself infinitely often, and some strict ancestor 
of $y_m$ infinitely often. Hence, 
$L_{\emptyset}(x_n, y_m)$ is 
$k(k^m -1)/(k-1)$ infinitely often, 
and $k(k^j -1)/(k-1)$ for $j < m$ infinitely often, which contradicts the 
convergence of $(x_n)_{n \geq 1}$ in the 
Martin compactification of $E$. 
Hence, for all $m \geq 1$, there exists 
a sequence $y_m$ of length $m$ such that 
$x_n$ starts with $y_m$ for all $n$ large enough. 
Conversely, if $(x_n)_{n \geq 1}$ satisfies the property given in the previous sentence, then for all $x \in E$
different from $\emptyset$, 
and for $n$ large enough, 
the last common ancester of $x_n$ and $x$
is the same as the last common ancestor of $y_m$ and $x$, if $m \geq 1$ is larger than or equal to the length of $x$. Hence, for $n$ large enough, 
$$L_{\emptyset}(x, x_n) = 
L_{\emptyset}(x, y_m),$$
which implies the convergence of $(x_n)_{n \geq 1}$ in the Martin compactification of 
$E$. 
If $(x_n)_{n \geq 1}$ converges to a limit 
which is not in $E$, then the limit 
$\alpha$ in the Martin 
boundary of $E$ is defined by the function 
$$x \mapsto L_{\emptyset} (x, \alpha) 
= \lim_{n \rightarrow \infty}
L_{\emptyset} (x, x_n) = L_{\emptyset} 
(x, y_m),$$
if $m \geq 1$ is at least the length of $x$. The sequences $(y_m)_{m \geq 1}$ are compatible with each other, i.e. for all 
$m \geq 1$, $y_m$ is the father of $y_{m+1}$. Hence, there exists an sequence 
$y_{\infty}$ such that for all $m \geq 1$, 
$y_m$ is the sequence of the $m$ first elements of $y_{\infty}$. 
We deduce that $\alpha \in \partial E$ is 
identified by the function: 
$$ x \mapsto L_{\emptyset} (x, \alpha)
=  \mathds{1}_{x = \emptyset} + \frac{k (k^j - 1)}{k-1} \mathds{1}_{x
 \neq \emptyset} ,$$
where $j$ is the largest integer such that 
the $j$ first elements of $x$ and $y_{\infty}$ are the same. 
The function 
 $x \mapsto L_{\emptyset} (x, \alpha)$
depends only on the sequence $y_{\infty}$, 
and one easily checks that we obtain 
different functions for different infinite sequences. Hence $\alpha$ can be identified with $y_{\infty}$. 
We deduce that $\partial E$ can be identified with the set of all infinite sequences of elements in $\{0, 1, \dots, k-1\}$. Intuitively, these sequences correspond to the "leafs of the tree". 
The topology of $\partial E$ is given by 
the following convergence: $(y^{(n)}_{\infty})_{n \geq 1}$ converges to $y_{\infty}$ if and only if for all $k \geq 1$, the $k$-the element of
$y^{(n)}_\infty$ and the $k$-th element of 
$y_{\infty}$ coincide for all but finitely many $n \geq 1$.  The topology of $\partial E$ is also induced by the  
distance $D$ such that $D(y_{\infty}, 
y'_{\infty}) = 2^{-k}$, where $k$ is the 
first integer such that the $k$-th elements of $y_{\infty}$ and $y'_{\infty}$ are different.  
If we normalize the stationnary measure in such a way that $\beta(\emptyset) = 
k/(k-1)$, we get 
$$\varphi_{\emptyset, y_{\infty}} 
(x) = k^{j} -1$$ 
where $j$ is the largest integer such that 
the $j$ first elements of $y_{\infty}$ and 
$x$ coincide. This function is nonnegative, zero at $\emptyset$ and harmonic at every other point. It is 
minimal among such functions. Indeed, 
let $\varphi$ be a function satisfying the same properties, which is smaller than or equal 
to $\varphi_{\emptyset, y_{\infty}}$. 
For $m \geq 0$, let $E_m$ be the subset 
of $E$ consisting of sequences whose $m$ 
first elements coincide with those of $y_{\infty}$. Let $F_m := E_m \backslash 
E_{m+1}$. If $y_m$ is the sequence of the 
$m$ first elements of $y_{\infty}$, then 
for $x \in F_m$, under $\mathbb{P}_x$, 
$X_n$ is in $F_m \backslash \{y_m\}$ for all $n < T_{y_m}$. 
Now, for all $z \in F_m$, 
$$\varphi(z) \leq \varphi_{\emptyset, y_{\infty}} (z) = k^m - 1,$$
which implies, since $\varphi$ is harmonic 
at all points in $F_m \backslash \{y_m\}$, that $(\varphi(X_{n \wedge T_{y_m}}))_{n \geq 0}$ is a bounded martingale. Hence, 
under $\mathbb{P}_x$, 
$$\varphi(x) =  \varphi(X_0) 
= \mathbb{E}_x [ \varphi(X_{T_{y_m}})]
= \varphi(y_m).$$
Hence, $\varphi(x)$ depends only on 
the largest $m$ such that the $m$ first elements of $x$ and $y_{\infty}$ coincide. 
It is then sufficient to check that there
exists $C > 0$ such that  
$\varphi(y_m) = C(k^m - 1)$ for all $m \geq 0$. Now, this result is a consequence of the fact that $\varphi(y_0) = 
\varphi(\emptyset) = 0$ and the 
harmonic property of $\varphi$, which imply that for $m \geq 1$, 
$$\varphi(y_m) = \frac{1}{2}
\varphi(y_{m-1}) + \frac{k-1}{2k} 
\varphi(y_m)  + \frac{1}{2k} \varphi(y_{m
+1}).$$
We have then proven that $\varphi_{\emptyset, y_{\infty}}$ is minimal, and then the minimal boundary 
$\partial_m E$ 
is equal to the Martin boundary $\partial E$. 
We have then again a complete description 
of the families of measures $(\mathbb{Q}^{\alpha}_{x})_{x \in E}$ for $\alpha \in \partial_m E$. In the case $k=2$, all these measures were already describd in 
Subsection 4.3.3 of \cite{NRY}.
 Integrating with respect to 
a finite measure on $\partial_m E$ gives 
all the possible families of measures 
in the class $\mathcal{Q}$. Note that in this example, the Martin boundary is uncountable. 

\subsection*{Acknowledgment}
I would like to thank Ph. Biane and Ph. Bougerol for the discussion we had on the possibility of a link between the Martin  
boudary and the $\sigma$-finite measures studied here.

\providecommand{\bysame}{\leavevmode\hbox to3em{\hrulefill}\thinspace}
\providecommand{\MR}{\relax\ifhmode\unskip\space\fi MR }
% \MRhref is called by the amsart/book/proc definition of \MR.
\providecommand{\MRhref}[2]{%
  \href{http://www.ams.org/mathscinet-getitem?mr=#1}{#2}
}
\providecommand{\href}[2]{#2}

\end{document}